\topskip \setlength{\parindent}{0pt} \setlength{\parskip}{5pt plus
\theoremstyle{remark}
\theoremstyle{plain}
\newtheorem{theorem}{Theorem}[section]
\newtheorem{lemma}[theorem]{Lemma}
\newtheorem{proposition}[theorem]{Proposition}
\newtheorem{example}{Example}[section]
\newtheorem{corollary}[theorem]{Corollary}
\begin{document}
%-----------------------------------------------------------
\title{Counting $k$-ary words by number of adjacency differences of a prescribed size}

\author{Sela Fried\\
Department of Computer Science\\
Israel Academic College\\
52275 Ramat Gan, Israel\\
{\tt friedsela@gmail.com} \and
Toufik Mansour\\
Mathematics Department\\
University of Haifa\\
3498838 Haifa, Israel \\
{\tt tmansour@univ.haifa.ac.il} \and
Mark Shattuck\\
Mathematics Department\\
University of Tennessee\\
Knoxville, TN 37996\\
{\tt mark.shattuck2@gmail.com} }
\date{\small }%\today}
\maketitle
%-----------------------------------------------------------

\begin{abstract}
Recently, the general problem of enumerating permutations $\pi=\pi_1\cdots \pi_n$  such that $\pi_{i+r}-\pi_i \neq s$ for all $1\leq i\leq n-r$, where $r$ and $s$ are fixed, was considered by Spahn and Zeilberger. In this paper, we consider an analogous problem on $k$-ary words involving the distribution of the corresponding statistic.  Note that for $k$-ary words, it suffices to consider only the $r=1$ case of the aforementioned problem on permutations.  Here, we compute for arbitrary $s$ an explicit formula for the ordinary generating function for $n \geq 0$ of the distribution of the statistic on $k$-ary words $\rho=\rho_1\cdots\rho_n$ recording the number of indices $i$ such that $\rho_{i+1}-\rho_i=s$. This result may then be used to find a comparable formula for finite set partitions with a fixed number of blocks, represented sequentially as restricted growth functions.  Further, several sequences from the OEIS arise as enumerators of certain classes of $k$-ary words avoiding adjacencies with a prescribed difference.  The comparable problem where one tracks indices $i$ such that the absolute difference $|a_{i+1}-a_i|$ is a fixed number is also considered on $k$-ary words and the corresponding generating function may be expressed in terms of Chebyshev polynomials.  Finally, combinatorial proofs are found for several related recurrences and formulas for the total number of adjacencies of the form $a(a+s)$ on the various structures.
\end{abstract}

\noindent 2020 {\em Mathematics Subject Classification}: Primary 05A15; Secondary 05A18.\smallskip\\
\noindent {\em Keywords}: $k$-ary word; generating function; combinatorial statistic; set partition.

\section{Introduction}

Spahn and Zeilberger \cite{SZ} considered the problem of counting permutations $\pi=\pi_1\cdots \pi_n$, represented in the one-line notation, that avoid certain adjacencies and raised the general questions of finding
$$u_{r,s}(n)=\#\{\pi \in \mathcal{S}_n: \pi_{i+r}-\pi_i \neq s \text{ for all } 1 \leq i \leq n-r\}$$
and
$$v_{r,s}(n)=\#\{\pi \in \mathcal{S}_n: |\pi_{i+r}-\pi_i| \neq s \text{ for all } 1 \leq i \leq n-r\},$$
for any $r,s \geq 1$.  They sought efficient methods for computing from scratch as many terms as possible from these sequences for any fixed $r$ and $s$.  To this end, they developed, using a weighted inclusion/exclusion type argument, general formulas for $u_{r,s}(n)$ and $v_{r,s}(n)$, which though they were complicated could be applied in some specific cases of $r$ and $s$.  They also supplied combinatorial proofs of the two-term and four-term recurrences satisfied by the sequences $u_{1,1}(n)$ and $v_{1,1}(n)$, respectively.

The problem of enumerating permutations that avoid adjacencies of certain letters is one that had arisen previously in various contexts.  For example, the sequence $v_{1,1}(n)$ has attracted considerable attention, since in addition to enumerating permutations of $[n]$ without rising or falling successions, it gives the number of ways of arranging $n$ non-attacking kings on an $n\times n$ board with one in each row and column. See, e.g., entry A002464 in the OEIS \cite{Sl} for further information.  Riordan \cite{Rior} derived a four-term recurrence for $v_{1,1}(n)$ and Robbins \cite{Rob} gave an explicit formula using an inclusion/exclusion argument.  An extension of the latter formula to $v_{r,r}(n)$ for any $r \geq 1$ was found by Tauraso \cite{Tau} in conjunction with his work on the rectangular case of the dinner table problem and was instrumental in establishing an asymptotic estimate for $v_{r,r}(n)$ for large $n$ and $r$ fixed.  Finally, Navarrete \cite{Nav} used the principle of inclusion/exclusion to derive the formula
$$u_{1,s}(n)=\sum_{j=0}^{n-s}(-1)^j\binom{n-s}{j}(n-j)!, \qquad n \geq s,$$
for each $s \geq 1$, which was subsequently shown in \cite{SZ}, using the Zeilberger algorithm \cite{Zeil}, to satisfy the two-term recurrence
$$u_{1,s}(n)=(n-1)u_{1,s}(n-1)+(n-s-1)u_{1,s}(n-2), \qquad n \geq s+1,$$
with initial values $u_{1,s}(n)=n!$ for $0 \leq n \leq s$.  A combinatorial proof was also given of this recurrence in the case $s=1$, and it is possible to extend this argument to general $s$.

By a $k$-\emph{ary word}, we mean a member of $[k]^n$ for some $n \geq0$, represented sequentially. Here, we consider an analogous problem on $k$-ary words to the one described above on permutations and study the distribution of the statistic on $[k]^n$ recording the number of occurrences of certain adjacencies (marked by a variable $q$).  When $q=0$, one obtains avoidance results paralleling those described above for permutations.  Let $\rho=\rho_1\cdots \rho_n \in [k]^n$.  When tracking the number of indices $i$ such that $\rho_{i+r}-\rho_i=s$ in members of $[k]^n$, it suffices to consider only the $r=1$ case (see remark at end of second section).  Let $\mu(\rho)$ and $\nu(\rho)$ denote the number of $i \in [n-1]$ such that $\rho_{i+1}-\rho_i=s$ and $|\rho_{i+1}-\rho_i|=s$, respectively.  We seek to find explicit formulas for the generating functions
$$A(x)=\sum_{n\geq0}\left(\sum_{\rho \in [k]^n}q^{\mu(\rho)}\right)x^n$$
and
$$B(x)=\sum_{n\geq0}\left(\sum_{\rho \in [k]^n}q^{\nu(\rho)}\right)x^n,$$
for arbitrary $s \geq 1$.  From the formulas for $A(x)$ and $B(x)$, one can derive recurrences for the respective distributions and letting $q=0$ gives results for the corresponding classes of $k$-ary words avoiding the adjacencies in question.  Other classes of $k$-ary words that have previously been studied which satisfy certain adjacency restrictions include smooth words \cite{MS3}, Catalan words \cite{BCR,BKV}, those avoiding a consecutive pattern \cite{BM} and others restricted by various statistics \cite{Ma}.

We also consider the problem of counting adjacencies with a given difference on the set of finite partitions of a prescribed size. By a \emph{partition} of $[n]$, we mean a collection of pairwise disjoint subsets, called \emph{blocks}, whose union is $[n]$.  Let $\mathcal{P}_{n,k}$ denote the set of partitions of $[n]$ with $k$ blocks and $\mathcal{P}_n=\cup_{k=0}^n\mathcal{P}_{n,k}$ the set of all partitions of $[n]$.  Recall that $|\mathcal{P}_{n,k}|=S(n,k)$, the Stirling number of the second kind, and $|\mathcal{P}_n|=B_n=\sum_{k=0}^nS(n,k)$, the $n$-th Bell number.  A partition $\Pi=T_1/T_2/\cdots$ is said to be in \emph{standard form} if its blocks $T_i$ satisfy $\text{min}(T_i)\leq \min(T_{i+1})$ for all $i \geq 1$.  In this case, then $\Pi$ may be represented sequentially by the canonical form $\pi=\pi_1\cdots\pi_n$ wherein $i \in B_{\pi_i}$ for $1 \leq i \leq n$ (see, e.g., \cite{Mi} or \cite{SW}). For example, if $\Pi=\{1,3,7\},\{2,4\},\{5,6\} \in \mathcal{P}_{7,3}$, then $\pi=1212331$.  Note that the canonical sequences $\pi$ satisfy the condition $\pi_{i+1} \leq \max(\pi_1\cdots\pi_i)+1$ for $1 \leq i \leq n-1$, with $\pi_1=1$, and thus are often referred to as \emph{restricted growth functions}.

In Section \ref{partitionsect}, we consider the statistic tracking the number of occurrences of $a(a+s)$ where $s \geq 2$ in the members $\pi \in \mathcal{P}_n$, represented sequentially as $\pi=\pi^{(1)}\pi^{(2)}\cdots$ where each $\pi^{(i)}$ is $i$-ary and starts with $i$.  We remark that this extends work begun in \cite{MS1}, where the $s=1$ case arose as a parameter tracking certain kinds of connectors between blocks of a finite partition. See also the related paper \cite{MM}.  For a variety of examples of statistics on the sequential forms of set partitions, we refer the reader to the text \cite{Man} and references contained therein.

The organization of this paper is as follows.  In the next section, we determine a formula for the generating function of the distribution on $k$-ary words for the parameter tracking the number of strings of the form $a(a+s)$ for a fixed $s\geq 1$.  We make use of linear algebra and Cramer's rule to derive our formula. In the third section, we consider several classes of $k$-ary words avoiding $a(a+s)$ corresponding to particular $k$ and $s$.  Among our results is a bijection between the set of $4$-ary words of length $n$ avoiding occurrences of 1-3 and 2-4 and the set of $(2^m-1)$-color compositions of $n+1$.  In the fourth section, we extend our generating function result when $s \geq 2$ to $\mathcal{P}_{n,k}$ for a fixed $k$.  From this, we may compute an explicit formula in terms of Stirling numbers for the total number of occurrences of $a(a+s)$ in all the members of $\mathcal{P}_{n,k}$.  Further, one may obtain the exponential generating function for the total number of occurrences in all the members of $\mathcal{P}_n$, from which it is possible to express the total on $\mathcal{P}_n$ in terms of Bell numbers.

In the fifth section, we consider the absolute difference and thus enumerate strings of the form $a(a\pm s)$ within $k$-ary words and compute an explicit formula for the generating function in terms of Chebyshev polynomials (see, e.g., \cite{Ri}). For this, we must differentiate two general cases wherein either $s+1 \leq k \leq 2s$ or $k \geq 2s+1$. In the latter apparently more difficult case, to prove our result, we make use of an LU-decomposition of the coefficient matrix for a (linear) system that arises involving certain unknown generating functions.  In the final section, we provide combinatorial proofs of several of the prior results which were shown by various algebraic methods, among them formulas for the total number of occurrences of $a(a+s)$ on the related classes of sequences as well as recurrences for the distributions of $a(a+s)$ and $a(a\pm s)$ on $k$-ary words.

\section{Distribution of $a(a+s)$ on $k$-ary words}

Let $\mathcal{A}_{n,k}$ denote the set of $k$-ary words of length $n$ and $\mathcal{A}_{n,k}^{(i)}$, the subset of $\mathcal{A}_{n,k}$ consisting of those words ending in $i$ for $1 \leq i \leq k$.   Given $k,s \geq 1$, let $a_n=a_n(q)=a_n^{(k,s)}(q)$ for $n \geq 1$ denote the distribution on $\mathcal{A}_{n,k}$ for the number of occurrences of the string $a(a+s)$ for some $a$, with $a_0=1$. To determine $a_n$, we refine it by considering the final letter and let $a_{n,i}=a_{n,i}(q)$ for $1 \leq i \leq k$ denote the restriction of the distribution $a_n$ to $\mathcal{A}_{n,k}^{(i)}$.

We have the following system of recurrences for the array $a_{n,i}$.

\begin{lemma}\label{lem1}
Let $k$ and $s$ be fixed positive integers, with $k \geq s+1$.  If $n \geq 2$, then
\begin{align}
a_{n,i}&=a_{n-1}, \qquad 1 \leq i \leq s, \label{aneq1}\\
a_{n,i}&=a_{n-1}+(q-1)a_{n-1,i-s}, \qquad s+1 \leq i \leq k, \label{aneq2}
\end{align}
with $a_{1,i}=1$ for $1 \leq i \leq k$.
\end{lemma}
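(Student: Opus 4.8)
The plan is to prove the system of recurrences by a straightforward case analysis on the last two letters of a word $\rho \in \mathcal{A}_{n,k}$, keeping track of how the statistic $\mu$ changes when the final letter is appended. Fix $n \geq 2$ and write $\rho = \rho' \rho_n$ where $\rho' = \rho_1 \cdots \rho_{n-1} \in \mathcal{A}_{n-1,k}$. The number of occurrences of a string $a(a+s)$ in $\rho$ equals the number in $\rho'$, plus $1$ precisely when $\rho_{n-1} = \rho_n - s$, i.e. when $\rho' \in \mathcal{A}_{n-1,k}^{(\rho_n - s)}$; this requires $\rho_n - s \geq 1$, that is $\rho_n \geq s+1$. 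So appending the letter $i$ to an arbitrary word of $\mathcal{A}_{n-1,k}$ contributes $q^{\mu(\rho')}$ if $i \leq s$ and $q^{\mu(\rho')}$ or $q^{\mu(\rho')+1}$ if $i \geq s+1$, the latter exactly for those $\rho'$ ending in $i-s$.

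Turning this into generating-function identities: for $1 \leq i \leq s$, every word of $\mathcal{A}_{n-1,k}$ extends to a word of $\mathcal{A}_{n,k}^{(i)}$ with no new occurrence created, so $a_{n,i} = \sum_{\rho' \in \mathcal{A}_{n-1,k}} q^{\mu(\rho')} = a_{n-1}$, which is \eqref{aneq1}. For $s+1 \leq i \leq k$, split $\mathcal{A}_{n-1,k}$ according to whether the last letter of $\rho'$ equals $i-s$ or not. Words with last letter $\neq i-s$ contribute $a_{n-1} - a_{n-1,i-s}$ (with weight unchanged), while words with last letter $= i-s$ contribute $q \cdot a_{n-1,i-s}$ (each weight multiplied by $q$). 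Summing gives $a_{n,i} = (a_{n-1} - a_{n-1,i-s}) + q\, a_{n-1,i-s} = a_{n-1} + (q-1) a_{n-1,i-s}$, which is \eqref{aneq2}. Note that the hypothesis $k \geq s+1$ guarantees the range $s+1 \leq i \leq k$ is nonempty and that the index $i-s$ appearing in \eqref{aneq2} lies in $[1,k-s] \subseteq [k]$, so $a_{n-1,i-s}$ is defined.

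Finally, for the base case $n=1$: a word of length $1$ contains no adjacency, so $\mu \equiv 0$ on $\mathcal{A}_{1,k}$, and each $\mathcal{A}_{1,k}^{(i)}$ is the single word $i$; hence $a_{1,i} = 1$ for all $1 \leq i \leq k$, as claimed.

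This argument is essentially all bookkeeping; there is no real obstacle. The only point requiring a small amount of care is the claim that appending letter $i$ can create \emph{at most one} new occurrence of a string of the form $a(a+s)$ — this is clear since the only new adjacent pair is $(\rho_{n-1},\rho_n)$ — together with the observation that for $i \leq s$ no such pair can have the required difference because $\rho_{n-1} \geq 1 > i - s$. One should also record, for use in later sections, that summing \eqref{aneq1} and \eqref{aneq2} over $i$ yields a recurrence for $a_n = \sum_{i=1}^k a_{n,i}$ itself, namely $a_n = k\, a_{n-1} + (q-1)\sum_{i=1}^{k-s} a_{n-1,i}$.
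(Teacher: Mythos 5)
Your proof is correct and follows essentially the same argument as the paper: append the final letter $i$ to a word of length $n-1$, note that no new occurrence of $a(a+s)$ arises when $i\leq s$, and for $i\geq s+1$ split $\mathcal{A}_{n-1,k}$ according to whether the penultimate letter equals $i-s$, giving $a_{n,i}=qa_{n-1,i-s}+a_{n-1}-a_{n-1,i-s}$. The extra bookkeeping you record (the index range check and the summed recurrence for $a_n$) is harmless but not needed for the lemma itself.
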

\begin{proof}
We may assume $n \geq 2$, the initial condition being clear.  Note that one may append $i$ for each $i \in [s]$  to any member of $\mathcal{A}_{n-1,k}$ without introducing an occurrence of $a(a+s)$, which implies \eqref{aneq1}. On the other hand, if $i \in [s+1,k]$, then appending $i$ introduces an occurrence of $a(a+s)$ when added to a member of $\mathcal{A}_{n-1,k}^{(i-s)}$, with no such change occurring for members of $\mathcal{A}_{n-1,k}-\mathcal{A}_{n-1,k}^{(i-s)}$.  Combining these cases then implies $a_{n,i}=qa_{n-1,i-s}+a_{n-1}-a_{n-1,i-s}$, which gives \eqref{aneq2}.
\end{proof}

Let ${\bf C}_m=c_{i,j}$ denote the $m \times m$ matrix in which $c_{i,i+1}=1$ for all $i$ and $c_{i,i-s+1}=z$ for $s \leq i \leq m$, with $c_{i,j}=0$ for all other $i$, $j$.  We will need the following determinant formula.

\begin{lemma}\label{lem2}
Let $m \geq 1$ be given by $m=ds+r$, where $d \geq0$, $s \geq 1$ and $0 \leq r \leq s-1$.  Then we have
\begin{equation}\label{lem2e1}
\det({\bf C}_m)=\begin{cases}
       (-1)^{m-d}z^d, & \text{if $r=0$;} \\
         0, & \text{if $1 \leq r \leq s-1$.}\\
    \end{cases}
\end{equation}
\end{lemma}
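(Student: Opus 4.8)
The plan is to expand $\det(\mathbf{C}_m)$ along its first column and obtain a recurrence in $m$. Recall that the only nonzero entries of $\mathbf{C}_m$ are the superdiagonal entries $c_{i,i+1}=1$ (for $1\le i\le m-1$) and the entries $c_{i,i-s+1}=z$ (for $s\le i\le m$), the latter forming a subdiagonal band $s-1$ steps below the main diagonal. The first column therefore has exactly one nonzero entry, namely $c_{s,1}=z$ (provided $m\ge s$; if $m<s$ the first column is entirely zero and $\det(\mathbf{C}_m)=0$, consistent with $d=0$, $r=m$). Deleting row $s$ and column $1$, the resulting $(m-1)\times(m-1)$ minor splits as a block upper-triangular matrix: rows/columns $1,\dots,s-1$ of the minor contribute the superdiagonal $1$'s from rows $1,\dots,s-1$ of $\mathbf{C}_m$, giving an $(s-1)\times(s-1)$ upper-triangular block with $1$'s on the diagonal, while the remaining block is exactly $\mathbf{C}_{m-s}$ (the band structure is translation-invariant). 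Tracking the sign of the cofactor — the entry sits in position $(s,1)$, contributing $(-1)^{s+1}$ — I expect to arrive at
\begin{equation*}
\det(\mathbf{C}_m)=(-1)^{s+1}z\,\det(\mathbf{C}_{m-s}),\qquad m\ge s,
\end{equation*}
with the convention $\det(\mathbf{C}_0)=1$ and $\det(\mathbf{C}_m)=0$ for $1\le m\le s-1$.

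From this recurrence the closed form follows by induction on $d$. Writing $m=ds+r$ with $0\le r\le s-1$: if $r\ne 0$, then iterating the recurrence $d$ times reduces to $\det(\mathbf{C}_r)=0$ since $1\le r\le s-1$, giving the second case. If $r=0$, iterating $d$ times gives $\det(\mathbf{C}_{ds})=\bigl((-1)^{s+1}z\bigr)^d\det(\mathbf{C}_0)=(-1)^{d(s+1)}z^d$. It remains to check $(-1)^{d(s+1)}=(-1)^{m-d}$ when $m=ds$: indeed $m-d=ds-d=d(s-1)$, and $d(s+1)$ and $d(s-1)$ differ by $2d$, hence have the same parity, so $(-1)^{d(s+1)}=(-1)^{d(s-1)}=(-1)^{m-d}$, matching the claimed formula.

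The main obstacle, and the step deserving the most care, is verifying the block decomposition of the minor obtained after striking row $s$ and column $1$ — specifically, confirming that after the deletion the surviving superdiagonal $1$'s from the top $s-1$ rows occupy precisely the diagonal of an $(s-1)\times(s-1)$ leading block, that this block is upper triangular (so its determinant is $1$), that the off-diagonal coupling between the two blocks vanishes, and that the trailing block is genuinely $\mathbf{C}_{m-s}$ with its indices shifted consistently. One must also double-check the boundary cases $m=s$ (where the trailing block is $\mathbf{C}_0$, the empty matrix) and $s=1$ (where the ``superdiagonal'' and the band coincide, so $\mathbf{C}_m$ is bidiagonal and the formula reads $\det(\mathbf{C}_m)=z^m$, matching $(-1)^{m-m}z^m$). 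Once the decomposition is established, the sign bookkeeping and the induction are routine.
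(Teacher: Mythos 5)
Your proof is correct, and it takes a different route from the paper's. The paper argues directly from the Leibniz expansion $\det(\mathbf{C}_m)=\sum_{\sigma}(-1)^{\mathrm{sgn}(\sigma)}c_{1,\sigma(1)}\cdots c_{m,\sigma(m)}$: since rows $1,\dots,s-1$ each have a single nonzero entry and column $1$ has its only nonzero entry in row $s$, any contributing $\sigma$ must contain the cycle $(1,\dots,s)$, and iterating this forces $\sigma$ to be the product of the cycles $(\ell s+1,\dots,(\ell+1)s)$, which is impossible unless $s\mid m$ and otherwise yields the single term $(-1)^{m-d}z^d$. You instead extract the recurrence $\det(\mathbf{C}_m)=(-1)^{s+1}z\,\det(\mathbf{C}_{m-s})$ by Laplace expansion along the first column and finish by induction; the underlying structural facts exploited are identical (your one expansion step is, in effect, peeling off the paper's first cycle), but your version localizes all the sign bookkeeping into one cofactor and one parity check, which some readers will find cleaner, while the paper's version identifies the unique contributing permutation in one pass. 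The step you flagged as delicate does go through: after deleting row $s$ and column $1$, minor row $i$ for $i\le s-1$ has its unique nonzero entry (the $1$ from $c_{i,i+1}$) in minor column $i$, so the leading $(s-1)\times(s-1)$ block is the identity and the entire top-right block vanishes; the trailing block is $\mathbf{C}_{m-s}$ since the band structure is translation-invariant. (Two cosmetic quibbles: the resulting minor is block \emph{lower}-triangular, not upper-triangular, since the surviving $z$ entries in rows $s+1,\dots,2s-1$ land in the bottom-left block --- the determinant still factors as the product of the diagonal blocks; and for $s=1$ the $z$-band is the main diagonal rather than the superdiagonal, though your conclusion $\det(\mathbf{C}_m)=z^m$ there is right.) Your boundary conventions $\det(\mathbf{C}_0)=1$ and $\det(\mathbf{C}_m)=0$ for $1\le m\le s-1$, and the parity identity $(-1)^{d(s+1)}=(-1)^{d(s-1)}=(-1)^{m-d}$, are all correct.
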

\begin{proof}
If $s=1$, the result is apparent, so assume $s>1$.  The cases of \eqref{lem2e1} for $1 \leq m < s$ are readily verified, so assume $m\geq s$.  Recall the definition of the determinant
\begin{equation}\label{lem2e2}
\det({\bf C}_m)=\sum_{\sigma \in \mathcal{S}_m}(-1)^{\text{sgn}(\sigma)}c_{1,\sigma(1)}\cdots c_{m,\sigma(m)},
\end{equation}
where $\mathcal{S}_m$ is the set of permutations of $[m]$ and $\text{sgn}(\sigma)$ denotes the sign of the permutation $\sigma$.  Note that in order for the term in \eqref{lem2e2} corresponding to $\sigma$ to be non-zero, we must have $\sigma(1)=2,\sigma(2)=3,\ldots,\sigma(s-1)=s$.  This in turn implies $\sigma(s)=1$, for otherwise the factor $c_{\sigma^{-1}(1),1}$ would be zero in the product corresponding to $\sigma$ in \eqref{lem2e2}, as there is only one non-zero entry in the first column of ${\bf C}_m$.  Thus, $\sigma$ must contain the cycle $(1,2,\ldots,s)$. By similar reasoning, we then have that $\sigma$ must contain the cycles $(s+1,\ldots,2s)$, $(2s+1,\ldots,3s)$ and so on. If $m$ is not divisible by $s$, then not all of the cycles can have this form.  Indeed, if $\ell_0$ is the smallest $\ell$ such that $(\ell s+1,\ldots,(\ell+1)s)$ does not occur as a cycle of $\sigma$, then for some $x \in [\ell_0 s+1,(\ell_0+1)s]$, we must have $c_{x,\sigma(x)}=0$, which implies the second case of \eqref{lem2e1}.  If $m=ds$, then only $\sigma=(1,\ldots,s),\ldots,((d-1)s+1,\ldots,ds)$ can make a non-zero contribution to \eqref{lem2e2} and it is seen to be given by $(-1)^{m-d}z^d$, which implies the first case of \eqref{lem2e1} and completes the proof.
\end{proof}

Define the generating functions $A(x)=\sum_{n\geq0}a_nx^n$ and $A_i(x)=\sum_{n\geq1}a_{n,i}x^n$. We have the following explicit formula for $A(x)$.

\begin{theorem}\label{th1}
Let $k=k'+sk''$, with $1\leq k'\leq s$ and $k''\geq0$. Then the generating function for the distribution of the number of occurrences of $a(a+s)$ on the set of $k$-ary words of length $n$ for $n \geq 0$ is given by
\begin{equation}\label{th1e0}
A(x)=
\frac{(1+(1-q)x)^2}
{1-(k-2+2q)x-(k+s-1+q)(1-q)x^2+(k'(1+(1-q)x)-s)x((q-1)x)^{k''+1}}.
\end{equation}
\end{theorem}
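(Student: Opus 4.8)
The plan is to translate the recurrences of Lemma~\ref{lem1} into functional equations for the generating functions $A_i(x)$, solve the resulting (essentially triangular) system for each $A_i(x)$ in terms of $A(x)$, sum over $i$, and then close the loop via $A(x)=1+\sum_{i=1}^{k}A_i(x)$ to obtain a single equation for $A(x)$. Concretely, I would multiply \eqref{aneq1} and \eqref{aneq2} by $x^{n}$, sum over $n\geq2$, and use the values $a_{1,i}=1$ and $a_{0}=1$. Writing $t=(q-1)x$, this yields
\[
A_{i}(x)=xA(x)\quad(1\leq i\leq s),\qquad A_{i}(x)=xA(x)+tA_{i-s}(x)\quad(s+1\leq i\leq k).
\]

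The subsystem with $i>s$ is lower triangular in the natural order of indices, so it unwinds directly: if $i$ lies in the $j$-th block of $s$ consecutive indices, meaning $(j-1)s<i\leq js$ with the last block possibly incomplete, then
\[
A_{i}(x)=xA(x)\bigl(1+t+\cdots+t^{j-1}\bigr)=\frac{xA(x)\,(1-t^{j})}{1-t},
\]
the division by $1-t=1+(1-q)x$ being legitimate since that series is invertible. This is the step at which, were one instead to set up a genuine linear system and use Cramer's rule, Lemma~\ref{lem2} would carry the weight: the vanishing of $\det({\bf C}_{m})$ unless $s\mid m$ records precisely the fact that, in expanding the relevant determinants, only configurations built from complete blocks of length $s$ survive.

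Summing over $1\leq i\leq k$ and using $k=k'+sk''$ with $1\leq k'\leq s$ and $k''\geq0$ — so that $1,\dots,k$ splits into $k''$ complete blocks of length $s$ followed by one block of length $k'$ — gives
\[
\sum_{i=1}^{k}A_{i}(x)=\frac{sxA(x)}{1-t}\sum_{j=1}^{k''}\bigl(1-t^{j}\bigr)+\frac{k'xA(x)\,(1-t^{k''+1})}{1-t}.
\]
I would then substitute this into $A(x)=1+\sum_{i=1}^{k}A_{i}(x)$, evaluate the finite geometric sum, clear denominators by multiplying through by $(1-t)^{2}$, solve for $A(x)$, and use $sk''=k-k'$ to absorb the term carrying $k''$, arriving at
\[
A(x)=\frac{(1-t)^{2}}{(1-t)^{2}-kx(1-t)+sxt+xt^{k''+1}\bigl(k'(1-t)-s\bigr)}.
\]

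It then remains only to rewrite the denominator. Expanding $(1-t)^{2}$, $-kx(1-t)$ and $sxt$ and putting $t=(q-1)x$ (so $1-t=1+(1-q)x$ and $t^{k''+1}=((q-1)x)^{k''+1}$), the terms of degree at most $2$ collapse to $1-(k-2+2q)x-(k+s-1+q)(1-q)x^{2}$, while the remaining term becomes $\bigl(k'(1+(1-q)x)-s\bigr)x((q-1)x)^{k''+1}$; together with the numerator $(1-t)^{2}=(1+(1-q)x)^{2}$ this is exactly \eqref{th1e0}. I do not foresee a genuine obstacle here — the only delicate points are bookkeeping the incomplete final block in the summation and performing the concluding simplification without sign errors. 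Two quick consistency checks on \eqref{th1e0} are the cases $k''=0$ (then $k=k'\leq s$, every $A_{i}(x)=xA(x)$, and $A(x)=1/(1-kx)$) and $q=1$ (then $t=0$ and $A(x)=1/(1-kx)$, as it must be since $a_{n}=k^{n}$).
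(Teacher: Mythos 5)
Your argument is correct and reaches \eqref{th1e0} by a valid route; I verified the unwinding of the triangular system, the block-by-block summation, the closed form $A(x)=\frac{(1-t)^{2}}{(1-t)^{2}-kx(1-t)+sxt+xt^{k''+1}(k'(1-t)-s)}$ with $t=(q-1)x$, and the final expansion of the denominator into the stated quadratic-plus-tail form. The overall skeleton is the same as the paper's — pass from Lemma \ref{lem1} to functional equations, express each $A_i(x)$ as $xA(x)$ times a truncated geometric series in $(q-1)x$, sum, and solve — but you replace the paper's middle step with something more elementary. The paper sets up the $k\times k$ coefficient matrix ${\bf A}_k$, invokes Cramer's rule to get $A_i(x)=xA(x)\det({\bf B}_{i;i})$ as in \eqref{eqt0}, and then needs Lemma \ref{lem2} (the vanishing of $\det({\bf C}_m)$ unless $s\mid m$) to evaluate these determinants and arrive at \eqref{detB}; you obtain the identical intermediate formula $A_{i}(x)=xA(x)\sum_{j=0}^{d}((q-1)x)^{j}$ for $i$ in the $(d+1)$-st block simply by iterating the recursion $A_i(x)=xA(x)+(q-1)xA_{i-s}(x)$ down to the base case $A_i(x)=xA(x)$, $1\le i\le s$. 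This bypasses Lemma \ref{lem2} entirely, and your remark correctly identifies that the determinant lemma is doing nothing more than encoding the survival of complete $s$-blocks in the expansion. The only other (cosmetic) difference is that you organize the final sum by block index $j$ where the paper organizes it by residue class $r$ modulo $s$; the two decompositions of $\{1,\dots,k\}$ with $k=k'+sk''$ are equivalent and yield the same expression. What the paper's linear-algebra framing buys is reusability: the determinants $\det({\bf B}_{j-s+1;j-s+1})$ computed there are cited again verbatim in the proof of Theorem \ref{gfpartit}, whereas your direct iteration is the shorter path to Theorem \ref{th1} itself.
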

\begin{proof}
Note first that $A(x)$ reduces to $\frac{1}{1-kx}$, as it should, when $k''=0$, and hence we may assume $k''\geq 1$, i.e., $k \geq s+1$.  By \eqref{aneq1} and \eqref{aneq2}, we have $A_i(x)=xA(x)$ for $1 \leq i \leq s$ and $A_i(x)+(1-q)xA_{i-s}(x)=xA(x)$ for $s+1 \leq i \leq k$.

Let ${\bf A}_k=(d_{i,j})$ be the $k\times k$ matrix such that $d_{i,i}=1$, $d_{i,i-s}=(1-q)x$ and $d_{i,j}=0$ for all other $i,j$. Let ${\bf B}_{k;i}$ for $1 \leq i \leq k$ denote the matrix obtained from ${\bf A}_k$ by replacing its $i$-th column with $(1,1,\ldots,1)^t$.  By Cramer's rule, we have
$$A_i(x)=\frac{xA(x)\det({\bf B}_{k;i})}{\det({\bf A}_k)}.$$
Expanding repeatedly along the final column implies $\det({\bf B}_{k;i})=\det({\bf B}_{i;i})$. Since clearly $\det({\bf A}_k)=1$, we have
\begin{align}
A_i(x)=xA(x)\det({\bf B}_{i;i}).\label{eqt0}
\end{align}
Let $b_i=\det({\bf B}_{i,i})$ and $c_i=\det({\bf C}_i)$, where ${\bf C}_i$ is the matrix from Lemma \ref{lem2} with $z=(1-q)x$. Expanding along the first row, we obtain
\begin{equation}\label{th1e1}
b_i=b_{i-1}+(-1)^{i-1}c_{i-1}, \qquad i \geq 2,
\end{equation}
with $b_1=1$. Iterating \eqref{th1e1}, and making use of Lemma \ref{lem2}, yields
\begin{equation}\label{detB}
\det({\bf B}_{ds+r;ds+r})=b_{ds+r}=\sum_{i=0}^d(-(1-q)x)^i=\frac{1-(-(1-q)x)^{d+1}}{1+(1-q)x},
\end{equation}
where here $d \geq 0$ and $1\leq r\leq s$. Hence, by \eqref{eqt0} and the fact  $$A(x)=1+\sum_{i=1}^kA_i(x)=1+\sum_{r=1}^{k'}\sum_{d=0}^{k''}A_{ds+r}(x)+\sum_{r=k'+1}^s\sum_{d=0}^{k''-1}A_{ds+r}(x),$$ we obtain
\begin{align*}
A(x)&=1+xA(x)\left(\sum_{r=1}^{k'}\sum_{d=0}^{k''}\det({\bf B}_{ds+r;ds+r})+\sum_{r=k'+1}^s\sum_{d=0}^{k''-1}\det({\bf B}_{ds+r;ds+r})\right)\\
&=1+xA(x)\left(\sum_{r=1}^{k'}\sum_{d=0}^{k''}\frac{1-((q-1)x)^{d+1}}{1+(1-q)x}\sum_{r=k'+1}^s\sum_{d=0}^{k''-1}\frac{1-((q-1)x)^{d+1}}{1+(1-q)x}\right)\\
&=1+\frac{k'xA(x)}{1+(1-q)x}\left(k''+1-\frac{(q-1)x-((q-1)x)^{k''+2}}{1+(1-q)x}\right)\\
&\quad+\frac{(s-k')xA(x)}{1+(1-q)x}\left(k''-\frac{(q-1)x-((q-1)x)^{k''+1}}{1+(1-q)x}\right),
\end{align*}
which is equivalent to
$$A(x)=1+xA(x)
\frac{k'+sk''+(k'+sk''+s)(1-q)x-(k'(1+(1-q)x)-s)((q-1)x)^{k''+1}}{(1+(1-q)x)^2}.$$
Solving for $A(x)$ leads to the result.
\end{proof}

Note that $A(x)$ reduces to $\frac{1}{1-kx}$ when one takes $q=1$ in \eqref{th1e0}, as required.  Letting $q=0$ in \eqref{th1e0} yields the following result.

\begin{corollary}\label{cor1}
The generating function for the number of members of $\mathcal{A}_{n,k}$ having no occurrences of the string $a(a+s)$ is given by
\begin{equation}\label{cor1e1}
A(x)\mid_{q=0}=
\frac{(1+x)^2}
{1-(k-2)x-(k+s-1)x^2-(k'-s+k'x)(-x)^{k''+2}}.
\end{equation}
In particular, we have the recurrence
$$a_n(0)=(k-2)a_{n-1}(0)+(k+s-1)a_{n-2}(0)+(-1)^{k''}(k'-s)a_{n-k''-2}(0)+(-1)^{k''}k'a_{n-k''-3}(0),$$
for all $n \geq k''+3$.
\end{corollary}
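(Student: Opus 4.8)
The plan is to obtain Corollary \ref{cor1} as a direct specialization of Theorem \ref{th1}, followed by a routine extraction of the recurrence from the rational generating function. First I would set $q=0$ in \eqref{th1e0}. In the numerator, $(1+(1-q)x)^2$ becomes $(1+x)^2$. In the denominator, the term $-(k-2+2q)x$ becomes $-(k-2)x$, the term $-(k+s-1+q)(1-q)x^2$ becomes $-(k+s-1)x^2$, and the last term $(k'(1+(1-q)x)-s)x((q-1)x)^{k''+1}$ becomes $(k'(1+x)-s)x(-x)^{k''+1} = -(k'-s+k'x)(-x)^{k''+2}$, where I have absorbed the extra factor of $x$ and the sign $(-1)^{k''+1}$ into $(-x)^{k''+2}$ and then pulled out an overall minus sign to match the displayed form. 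This yields \eqref{cor1e1} after checking the signs carefully; the only mild subtlety is bookkeeping the powers of $-1$ when rewriting $x(-x)^{k''+1}$ as $(-x)^{k''+2}$.

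Next I would derive the recurrence. Writing $A(x)\mid_{q=0} = \sum_{n\geq0} a_n(0)x^n$ and clearing denominators in \eqref{cor1e1}, we get
\[
\left(1-(k-2)x-(k+s-1)x^2-(k'-s+k'x)(-x)^{k''+2}\right)\sum_{n\geq0}a_n(0)x^n = (1+x)^2.
\]
Since the right-hand side is a polynomial of degree $2$, for every $n$ exceeding the degree of the denominator polynomial the coefficient of $x^n$ on the left must vanish. The denominator polynomial has degree $k''+3$ (the highest-degree term coming from $k'x\cdot(-x)^{k''+2}$, which contributes $(-1)^{k''+2}k'x^{k''+3} = (-1)^{k''}k'x^{k''+3}$), and the $x^{k''+2}$ term has coefficient $-(k'-s)(-1)^{k''+2} = -(-1)^{k''}(k'-s)$. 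Hence for $n\geq k''+3$ (indeed $n > 2$ suffices provided $k''+3>2$, which holds since $k''\geq1$ in the nontrivial case), comparing coefficients of $x^n$ gives
\[
a_n(0) - (k-2)a_{n-1}(0) - (k+s-1)a_{n-2}(0) - (-1)^{k''}(k'-s)a_{n-k''-2}(0) - (-1)^{k''}k'a_{n-k''-3}(0) = 0,
\]
which rearranges to the stated recurrence. I should note that the case $k''=0$ is degenerate: then $k=k'\leq s$ and there are no occurrences of $a(a+s)$ at all, so $a_n(0)=k^n$ and the generating function reduces to $\frac{1}{1-kx}$, consistent with \eqref{cor1e1} since the last term and the $x^2$ term combine appropriately; the recurrence statement is intended for the substantive range and I would remark that $k''\geq1$ is the relevant hypothesis (consistent with $n\geq k''+3$ being the stated range).

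There is essentially no hard obstacle here; this is a specialization plus a standard transfer-matrix-style coefficient extraction. The only place where care is genuinely needed is the sign and degree bookkeeping in the term $(k'(1+(1-q)x)-s)x((q-1)x)^{k''+1}$: one must track that $((q-1)x)^{k''+1}\mid_{q=0} = (-x)^{k''+1} = (-1)^{k''+1}x^{k''+1}$, that multiplying by the external $x$ raises this to an $x^{k''+2}$ contribution, and that the linear factor $k'(1+x)-s = (k'-s) + k'x$ then spreads this across degrees $k''+2$ and $k''+3$ with the indicated signs. Once these are pinned down, matching against \eqref{cor1e1} and reading off the recurrence is immediate.
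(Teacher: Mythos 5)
Your proposal is correct and follows the same route as the paper, which simply obtains Corollary \ref{cor1} by setting $q=0$ in \eqref{th1e0} and reading off the recurrence from the rational form of the generating function. Your sign and degree bookkeeping for the term $(k'(1+(1-q)x)-s)x((q-1)x)^{k''+1}$ and the coefficient extraction for $n\geq k''+3$ are both accurate, so nothing further is needed.
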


We consider in greater detail some specific cases when $q=0$ in the subsequent section.

Note that the denominator in formula \eqref{th1e0} has a factor of $(1+(1-q)x)^2$.  Cancelling out this common factor with the numerator, and synthetically dividing $1+(1-q)x$ twice into the denominator, yields the alternative formula
\begin{equation}\label{th1alteq}
A(x)=\frac{1}{1-s_{k''}},
\end{equation}
where
$$s_{k''}=\sum_{i=0}^{k''}(q-1)^i(k-is)x^{i+1}.$$
Thus, we have the following alternative recurrence for $a_n$ when $n \geq k''+1$:
\begin{equation}\label{analtrec}
a_n=ka_{n-1}+(q-1)(k-s)a_{n-2}+(q-1)^2(k-2s)a_{n-3}+\cdots+(q-1)^{k''}(k-k''s)a_{n-k''-1},
\end{equation}
and, in particular,
\begin{equation}\label{analtrecq=0}
a_n(0)=ka_{n-1}(0)-(k-s)a_{n-2}(0)+(k-2s)a_{n-3}(0)+\cdots+(-1)^{k''}(k-k''s)a_{n-k''-1}(0).
\end{equation}

Differentiating both sides of {\eqref{th1e0} with respect to $q$, and setting $q=1$, gives
\begin{align*}
\frac{\partial}{\partial q}A(x)\mid_{q=1}&=\frac{x}{(1-kx)^2}\left(2-(k+s)x-\delta_{k'',0}(k'-s)x\right)-\frac{2x}{1-kx}\\
&=\frac{(k-s)x^2-\delta_{k'',0}(k'-s)x^2}{(1-kx)^2}=\begin{cases}
       0, & \text{if $k''=0$;} \\
         \frac{(k-s)x^2}{(1-kx)^2}, & \text{if $k''>0$.}
    \end{cases}
\end{align*}
Extracting the coefficient of $x^n$ yields the following result.

\begin{corollary}\label{cor2}
Let $n \geq 1$ and $k \geq s+1$. Then the total number of occurrences of the string $a(a+s)$ in all the members of $\mathcal{A}_{n,k}$ is given by $k^{n-2}(k-s)(n-1)$.
\end{corollary}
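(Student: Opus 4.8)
The plan is to read off the coefficient of $x^n$ in the power series $\frac{\partial}{\partial q}A(x)\mid_{q=1}$. Since $\frac{\partial}{\partial q}q^{\mu(\rho)}\mid_{q=1}=\mu(\rho)$, differentiating $A(x)=\sum_{n\geq0}\big(\sum_{\rho\in\mathcal{A}_{n,k}}q^{\mu(\rho)}\big)x^n$ term-by-term and setting $q=1$ produces the generating function whose $x^n$-coefficient is exactly the total number $t_n$ of occurrences of $a(a+s)$ over all words of $\mathcal{A}_{n,k}$. The hypothesis $k\geq s+1$ is equivalent to $k''\geq1$, so the evaluation recorded immediately above the statement gives
$$\frac{\partial}{\partial q}A(x)\Big|_{q=1}=\frac{(k-s)x^2}{(1-kx)^2}.$$

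Next I would expand $\dfrac{1}{(1-kx)^2}=\sum_{m\geq0}(m+1)k^m x^m$, so that
$$\frac{(k-s)x^2}{(1-kx)^2}=(k-s)\sum_{m\geq0}(m+1)k^m x^{m+2}=\sum_{n\geq2}(k-s)(n-1)k^{n-2}x^n.$$
Extracting $[x^n]$ for $n\geq2$ yields $t_n=k^{n-2}(k-s)(n-1)$, and for $n=1$ one has $t_1=0$ since a word of length one has no adjacencies, which agrees with the closed form under the harmless convention $k^{-1}\cdot0=0$. This proves the corollary.

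I do not anticipate any real obstacle, given Theorem \ref{th1}; the one point to be careful about is the case split in the $q$-derivative of \eqref{th1e0}, but this is automatic because $k\geq s+1$ forces $k''\geq1$ and hence the Kronecker-delta correction term disappears. As a remark, the count also admits a short combinatorial proof, which I would defer to the final section in keeping with the paper's organization: for each of the $(n-1)(k-s)$ pairs $(i,a)$ with $i\in[n-1]$ and $a\in[k-s]$, precisely $k^{n-2}$ words $\rho\in\mathcal{A}_{n,k}$ satisfy $\rho_i=a$ and $\rho_{i+1}=a+s$ (the remaining $n-2$ letters being arbitrary), and summing over all such pairs again gives $k^{n-2}(k-s)(n-1)$.
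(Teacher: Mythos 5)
Your proposal is correct and follows essentially the same route as the paper: differentiate \eqref{th1e0} with respect to $q$, set $q=1$ (noting that $k\geq s+1$ forces $k''\geq1$), and extract the coefficient of $x^n$ from $\frac{(k-s)x^2}{(1-kx)^2}$. Even your closing combinatorial remark about marked occurrences matches the argument the paper gives immediately after the corollary.
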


The preceding formula may also be realized combinatorially as follows.  Equivalently, we count marked members of $\mathcal{A}_{n,k}$ wherein an occurrence of $a(a+s)$ is marked.  Suppose $\pi=\pi_1\cdots \pi_n \in \mathcal{A}_{n,k}$ such that $\pi_i\pi_{i+1}$ for some $i$ corresponds to the marked occurrence of $a(a+s)$.  Then there are $n-1$ choices for the index $i$, and once this is chosen, $k^{n-2}$ possibilities for the letters of $\pi$ occurring in positions other than the $i$-th and $(i+1)$-st. Finally, there are $k-s$ choices for $\pi_i$, since it must belong to $[k-s]$, which determines $\pi_{i+1}$ and implies that there are $k^{n-2}(k-s)(n-1)$ marked members of $\mathcal{A}_{n,k}$.

\emph{Remark:} Given $r \geq 1$, let $j_n=j_n^{(r)}(q)$ denote the distribution for the statistic on $\mathcal{A}_{n,k}$ recording the number of indices $i \in [n-r]$ such that $\pi_{i+r}-\pi_i=s$ within  $\pi=\pi_1\cdots\pi_n$.  Finding $j_n$ then reduces to the $r=1$ case discussed above.  To see this, let $n=dr+t$, where $d \geq0$ and $0 \leq t \leq r-1$.  Consider the contributions to $j_n$ coming from each of the $r$ subsequences $S_i$ of $\pi$ given by
\begin{align*}
S_1&=\pi_1\pi_{r+1}\cdots\pi_{dr+1},\ldots,S_t=\pi_t\pi_{r+t}\cdots \pi_{dr+t},\\
S_{t+1}&=\pi_{t+1}\pi_{r+t+1}\cdots\pi_{(d-1)r+t+1},\ldots,S_{r-1}=\pi_{r-1}\pi_{2r-1}\cdots\pi_{(d-1)r+r-1},\\
S_r&=\pi_r\pi_{2r}\cdots\pi_{dr}.
\end{align*}
Since the letters in these subsequences may be chosen independently without affecting the corresponding distributions on each one, we have $j_n=a_{d+1}^ta_d^{r-t}$, where $a_n$ can be determined using \eqref{th1e0}.  A similar remark applies when one considers as in Section \ref{apm s} below the number of indices $i$ such that the absolute difference $|\pi_{i+r}-\pi_i|$ is a prescribed number.

\section{Some specific avoidance cases}

In this section, we consider several specific cases of $k$ and $s$ in $a_n^{(k,s)}(0)$.  First note that when $s=1$, one gets entry A277666 in the OEIS \cite{Sl}, which gives the anti-diagonals (in the natural order) of the square array $w_{n,k}$ for the number of members of $\mathcal{A}_{n,k}$ in which the string $a(a+1)$ fails to occur for any $a$, where one assumes $w_{n,0}=\delta_{n,0}$ and $w_{0,k}=1$ for all $n$ and $k$.

Let $F_n=F_{n-1}+F_{n-2}$ for $n \geq 2$ denote the $n$-th Fibonacci number, with $F_0=0$, $F_1=1$. Taking $k=3$ and $s=2$ in \eqref{th1alteq} at $q=0$ gives
$$A(x)|_{k=3,s=2,q=0}=\frac{1}{1-3x+x^2}=\sum_{n\geq 0}F_{2n+2}x^n,$$
and hence $a_n^{(3,2)}(0)=F_{2n+2}$ for all $n \geq 0$.

Taking $k=4$ and $s=2$ in \eqref{th1alteq} gives
$$A(x)\mid_{k=4,s=2,q=0}=\frac{1}{1-4x+2x^2}.$$
Comparing generating functions, this implies $a_n^{(4,2)}(0)=A007070(n)$ for all $n \geq 0$.

Taking $k=5$ and $s=2$ in \eqref{th1alteq} gives
\begin{align*}
A(x)\mid_{k=5,s=2,q=0}&=\frac{1}{1-5x+3x^2-x^3}=\frac{1}{x^3}\left(\frac{1-5x+3x^2}{1-5x+3x^2-x^3}-1\right)\\
&=\sum_{n\geq3}A200676(n)x^{n-3}=\sum_{n\geq 0}A200676(n+3)x^n,
\end{align*}
and hence $a_n^{(5,2)}(0)=A200676(n+3)$ for all $n \geq 0$.

To summarize, we have the following result.

\begin{corollary}\label{cor3}
If $n \geq 0$, then $a_n^{(3,2)}(0)=F_{2n+2}$, $a_n^{(4,2)}(0)=A007070(n)$ and $a_n^{(5,2)}(0)=A200676(n+3)$.
\end{corollary}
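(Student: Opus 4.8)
The plan is to derive all three identities directly from the alternative generating function formula \eqref{th1alteq}, specialized at $q=0$. Recall that \eqref{th1alteq} reads $A(x)=1/(1-s_{k''})$ with $s_{k''}=\sum_{i=0}^{k''}(q-1)^i(k-is)x^{i+1}$, where $k=k'+sk''$ and $1\leq k'\leq s$. So the first step in each case is to read off the correct representation $k=k'+sk''$ within the allowed range $1\leq k'\leq s$: for $(k,s)=(3,2)$ this forces $k'=1$, $k''=1$; for $(k,s)=(4,2)$ it forces $k'=2$, $k''=1$; and for $(k,s)=(5,2)$ it forces $k'=1$, $k''=2$.

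Next I would substitute these values, together with $q=0$, into $s_{k''}$ and simplify. This produces the three denominators $1-3x+x^2$, $1-4x+2x^2$, and $1-5x+3x^2-x^3$, so that $a_n^{(3,2)}(0)$, $a_n^{(4,2)}(0)$, and $a_n^{(5,2)}(0)$ are respectively the coefficients of $x^n$ in $\tfrac{1}{1-3x+x^2}$, $\tfrac{1}{1-4x+2x^2}$, and $\tfrac{1}{1-5x+3x^2-x^3}$. (One can equally read these off from Corollary \ref{cor1} or from \eqref{analtrecq=0}.)

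Finally I would identify each coefficient sequence. For the first, I invoke the classical Fibonacci identity $\sum_{n\geq0}F_{2n+2}x^n=\tfrac{1}{1-3x+x^2}$, equivalently the recurrence $F_{2n+2}=3F_{2n}-F_{2n-2}$ with $F_2=1$, $F_4=3$, giving $a_n^{(3,2)}(0)=F_{2n+2}$. For the second, the OEIS sequence $A007070$ is defined by $a(n)=4a(n-1)-2a(n-2)$ with $a(0)=1$, $a(1)=4$, whose generating function is exactly $\tfrac{1}{1-4x+2x^2}$, whence $a_n^{(4,2)}(0)=A007070(n)$. For the third, a short algebraic manipulation rewrites $\tfrac{1}{1-5x+3x^2-x^3}=\tfrac{1}{x^3}\!\left(\tfrac{1-5x+3x^2}{1-5x+3x^2-x^3}-1\right)$; since $\tfrac{1-5x+3x^2}{1-5x+3x^2-x^3}$ is the generating function of $A200676$, extracting the coefficient of $x^n$ after the shift by $3$ yields $a_n^{(5,2)}(0)=A200676(n+3)$.

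There is no real obstacle here, as the statement is a straightforward consequence of \eqref{th1alteq}; the only points demanding a little care are choosing the representation $k=k'+sk''$ in the prescribed range for each pair $(k,s)$, and, in the $k=5$ case, carrying out the index shift correctly so that the rational function matches the precise form under which $A200676$ is recorded in the OEIS. In all three cases the final step is merely a comparison of rational generating functions against the defining recurrences of the respective OEIS entries.
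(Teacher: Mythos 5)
Your proposal is correct and follows essentially the same route as the paper, which derives all three identities by specializing \eqref{th1alteq} at $q=0$ to obtain the denominators $1-3x+x^2$, $1-4x+2x^2$, and $1-5x+3x^2-x^3$ and then matching these rational functions (including the same index-shift manipulation for A200676). The paper additionally supplies an independent combinatorial proof via subtraction arguments in the final section, but your generating-function argument matches its primary derivation.
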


It is also possible to explain the prior equalities directly without recourse to generating functions, see final section.

By a $(2^m-1)$-\emph{color} composition, we mean one in which a part of size $i$ for each $i$ can come in one of $2^i-1$ colors.  (For an introduction to $m$-color compositions, we refer the reader to the seminal paper \cite{Ag}; see also \cite{E,MS2}, where functions of $m$ are considered.) Among the combinatorial interpretations for A007070$(n)$ is that it enumerates the set of $(2^m-1)$-color compositions of $n+1$ for $n \geq0$, which raises the question of finding a bijection with the class of words enumerated by $a_n^{(4,2)}(0)$.

\begin{proposition}\label{prop1}
There exists for each $n \geq 0$ a bijection between the set of $(2^m-1)$-color compositions of $n+1$ and the set of $4$-ary words of length $n$ in which the strings 1-3 and 2-4 do not occur.
\end{proposition}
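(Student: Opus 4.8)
\section*{Proof proposal for Proposition \ref{prop1}}

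The plan is to exhibit the bijection explicitly by first passing to a cleaner family of words and then decomposing both sides into matching ``stretch'' data. Since the letter $4$ can be followed by any letter without creating a forbidden adjacency, prepending a $4$ is a bijection between the $4$-ary words of length $n$ avoiding $1$-$3$ and $2$-$4$ and the $4$-ary words of length $n+1$ avoiding these strings and beginning with $4$; the latter set has the advantage that its size, $n+1$, already matches the size of the compositions we wish to reach. Call a letter in $\{3,4\}$ \emph{high} and a letter in $\{1,2\}$ \emph{low}, and decompose such a word $w$ of length $n+1$ into maximal runs of high and of low letters, $w=H_1L_1H_2L_2\cdots H_hL_h$, where each $H_i$ is a nonempty high run, each $L_i$ is a nonempty low run for $i<h$, $L_h$ may be empty, and $H_1$ begins with the prepended $4$. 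The key local observation is that within $w$ the only constraint is on the \emph{first} letter of each $H_i$: it is $4$ when $i=1$, and for $i\geq 2$ it is forced by the last letter of $L_{i-1}$ (nonempty), being the element of $\{3,4\}$ distinct from that letter plus $2$; every other high letter is free in $\{3,4\}$ and every low letter is free in $\{1,2\}$. Hence $w$ is encoded bijectively by the composition $(|H_1|,|L_1|,\ldots,|H_h|,|L_h|)$ of $n+1$ (dropping a trailing $0$ if $L_h$ is empty), together with a binary string of length $|H_i|-1$ for each $i$ (the free high letters) and a binary string of length $|L_i|$ for each $i$ (the low letters).

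Next I encode a $(2^m-1)$-color composition of $n+1$ by the same kind of data. The identity $2^i-1=\sum_{a=1}^{i}2^{i-a}$ says that the colors of a part of size $i$ are in bijection with pairs $(a,\beta)$ where $1\leq a\leq i$ and $\beta\in\{0,1\}^{i-a}$; encode such a colored part by the word $*^a\beta$ over $\{*,0,1\}$. Concatenating over all parts gives a word $u\in\{*,0,1\}^{n+1}$ beginning with $*$, and I claim the colored composition is equivalent to $u$ together with, for each maximal run of $*$'s in $u$, an \emph{arbitrary} subset of that run's internal gaps (the chosen gaps being the part boundaries lying strictly inside the run). Indeed, the boundary immediately before each maximal $*$-run is forced, because within its part a $*$ cannot be preceded by a $0$ or a $1$; conversely, any choice of internal boundaries splits each maximal $*$-run into consecutive blank runs, all but the last becoming singleton $*^a$-parts and the last being completed by the $0/1$-stretch following the run. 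This exhibits the colored composition via the composition $(|R_1|,|S_1|,\ldots)$ of $n+1$ read off from the maximal $*$-runs $R_i$ and the intervening $0/1$-stretches $S_i$, a binary string of length $|R_i|-1$ for each $i$ (indicators of the internal gaps of $R_i$), and the stretch $S_i$ itself for each $i$.

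The bijection is then obtained by matching the two encodings term by term: put $|H_i|=|R_i|$ and $|L_i|=|S_i|$, send the free high letters of $H_i$ to the gap indicators of $R_i$ (say $3\mapsto 0$, $4\mapsto 1$), and send $L_i$ to $S_i$ (say $1\mapsto 0$, $2\mapsto 1$). The constraint profiles agree on both sides: a high run of length $\ell$ contributes $\ell-1$ free bits after its forced first letter, a $*$-run of length $\ell$ has $\ell-1$ internal gaps, and the nonemptiness requirements on $L_i$ for $i<h$ match those on the $S_i$ preceding the last one. Composing with the ``prepend a $4$'' map gives the stated bijection, and running the construction backwards gives an explicit inverse (build $u$ from the composition, convert $*$-runs to high runs with forced first letters and $0/1$-stretches to low runs, then delete the leading $4$).

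The step I expect to require the most care is the description of a colored composition in terms of $u$ together with the chosen internal gaps: one must check that it is the \emph{maximal} $*$-runs of $u$, not the individual blank runs $*^{a_j}$, that are recoverable from $u$ alone, and that every subset of internal gaps genuinely yields a composition (each resulting part has size $\geq 1$). Dually, on the word side one must verify that the first letter of $H_i$ is truly forced for all $i\geq 2$, which is precisely where the nonemptiness of $L_{i-1}$ — hence the passage to maximal runs together with the prepended $4$ — is used. Once these points are settled, checking that the matching of encodings is size-preserving and bijective is routine, and the cases $n\leq 2$ serve as sanity checks.
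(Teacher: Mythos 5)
Your proposal is correct, but it takes a genuinely different route from the paper's. The paper models a $(2^m-1)$-color composition of $n+1$ geometrically as $n+1$ dots (some circled) separated by bars, encodes the construction of such a configuration as a length-$n$ sequence of four operations in which operation 4 never directly follows 2 or 3 --- i.e., as a word in the class $\mathcal{V}_n$ avoiding 2-4 and 3-4 --- and then transports $\mathcal{V}_n$ onto the target class $\mathcal{W}_n$ (avoiding 1-3 and 2-4) by the substitution $1^d3\mapsto 34^d$ with $d$ maximal. You instead work directly with $\mathcal{W}_n$: after prepending a 4 you split the word into maximal runs over $\{3,4\}$ and $\{1,2\}$ and note that the avoidance conditions constrain exactly the first letter of each high run, while on the other side you encode a colored part of size $i$ as $*^a\beta$ and note that in the concatenated word only the boundaries at the starts of maximal $*$-runs are forced, the internal gaps being free. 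Matching run lengths, free high letters, and low letters with run lengths, gap indicators, and the $0/1$-stretches then yields the bijection in one step; I checked the two points you flag as delicate (forcedness of the first letter of $H_i$ for $i\ge 2$ via the nonempty preceding low run, and forcedness of the boundary before each maximal $*$-run because a part $*^a\beta$ never has a $*$ after a $0$ or $1$), and both hold. Your argument is more self-contained and makes the inverse explicit without an intermediate avoidance class; the paper's version isolates the substitution $1^d3\mapsto 34^d$ as a bijection between two avoidance classes, which has independent interest, and its operation-sequence encoding is a transfer-matrix-style device that adapts readily to similar growth models.
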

\begin{proof}
Let $\mathcal{U}_n$ and $\mathcal{W}_n$ denote the respective sets in question.  First note that a member of $\mathcal{U}_n$ may be represented geometrically as a sequence of dots and vertical bars (which separate two adjacent dots), where there are $n+1$ dots in all any one of which may be circled such that at least one circled dot must occur between any two consecutive  bars as well as prior to the first and after the last bar.  The number of dots $i$ between two consecutive bars may be viewed as a part of size $i$ for some $i \geq 1$ which receives one of $2^i-1$ possible colors based on the non-empty subset of the dots occurring between the bars that are circled (for which there are $2^i-1$ possibilities).  Putting together the sequence of (colored) parts, including those corresponding to dots occurring prior to the first and after the last bars, yields a $(2^{m}-1)$-color composition of $n+1$.  Note that one-part compositions correspond to the case in which there are no vertical bars wherein some non-empty subset of the dots is circled.

We may now encode members of $\mathcal{U}_n$, represented geometrically as described above, as follows. Starting with the single member of $\mathcal{U}_0$, represented as a circled dot, we perform a sequence of $n$ operations, each being one of the four kinds: (1) add a vertical bar after the final dot, followed by a circled dot, (2) add a circled dot to the end, (3) add an uncircled dot to the end or (4) insert an uncircled dot directly prior to the terminal circled dot.  Further, we require that operation 4 be performed only directly after a step where 1 or 4 was performed (or as the first step in the sequence).  Note that, in the $j$-th step of the procedure for each $1 \leq j \leq n$, one of the four maneuvers is applied to a member of $\mathcal{U}_{j-1}$ and results in a member of $\mathcal{U}_{j}$. Further, it is seen that all of the distinct geometric representations of members of $\mathcal{U}_n$ arise uniquely by performing various sequences of $n$ operations, where in each step, one of the maneuvers 1 through 4 is executed such that a 4 never directly follows a 2 or 3.

Let $\mathcal{V}_n$ denote the set of $4$-ary sequences of length $n$ in which the strings 2-4 and 3-4 do not occur.  It then suffices to define a bijection between the sets $\mathcal{V}_n$ and $\mathcal{W}_n$. Let $\pi=\pi_1\cdots \pi_n \in \mathcal{V}_n$.   Consider replacing each string within $\pi$ of the form $1^d3$, where $d$ is maximal, with $34^d$.  Let $\pi'$ denote the resulting $4$-ary word.  Note that $\pi$ avoiding 3-4 implies the operation $\pi \mapsto \pi'$ is reversible, and the maximality of $d$ in each instance of the replacement implies $\pi'$ avoids 1-3.  Further, it is seen that no occurrences of 2-4 are introduced by these replacements.  Hence, $\pi \mapsto \pi'$ provides a bijection between $\mathcal{V}_n$ and $\mathcal{W}_n$, which when composed with the bijection between $\mathcal{U}_n$ and $\mathcal{V}_n$ implicit with the encoding described above, yields the desired bijection between $\mathcal{U}_n$ and $\mathcal{W}_n$.
\end{proof}

\subsection{A polynomial generalization of $F_{2n}$}

In this subsection, we consider a bivariate polynomial generalization of the Fibonacci numbers of even index by considering a pair of statistics on the set $\mathcal{J}_n$ of $3$-ary words of length $n$ in which 1-3 does not occur. We remark that it is possible to obtain polynomial generalizations of other similar sequences (such as those mentioned above from \cite{Sl}) for a fixed $k$ and $s$ by an analysis comparable to the one below.  By a \emph{level} and an \emph{ascent} within a sequence $\pi=\pi_1\cdots \pi_n$, we mean an index $i \in [n-1]$ such that $\pi_i=\pi_{i+1}$ and $\pi_i<\pi_{i+1}$, respectively. Let $\text{lev}(\pi)$ and $\text{asc}(\pi)$ denote the number of levels and ascents of $\pi$, respectively.

Note that the set equivalent to $\mathcal{J}_n$ consisting of the ternary sequences of length $n$ with letters in $\{0,1,2\}$ such that $1$ never immediately follows $0$ is mentioned in \cite[p.\,46,\,Exercise\,14(i)]{Stan}, where the problem of its enumeration is raised.  See also \cite[p.\,150]{BQ}, where a bijection with the set of square-and-domino tilings of length $2n+1$ is described.

Consider the following joint distribution $j_n=j_n(p,q)$ defined by
$$j_n(p,q)=\sum_{\pi\in \mathcal{J}_n}p^{\text{lev}(\pi)}q^{\text{asc}(\pi)}, \qquad n \geq 1.$$
Note that $j_n(1,1)=|\mathcal{J}_n|=F_{2n+2}$ for all $n$. Let $j_{n,i}$ for $1 \leq i \leq 3$ denote the restriction of $j_n$ to the subset of $\mathcal{J}_n$ whose members end in $i$. Considering the last two letters within a member of $\mathcal{J}_n$ implies the following recurrences for $n \geq 2$:
\begin{align*}
j_{n,1}&=pj_{n-1,1}+j_{n-1,2}+j_{n-1,3},\\
j_{n,2}&=qj_{n-1,1}+pj_{n-1,2}+j_{n-1,3},\\
j_{n,3}&=qj_{n-1,2}+pj_{n-1,3},
\end{align*}
with $j_{1,i}=1$ for each $i$.

Let $f_i=f_i(x)$ for $1 \leq i \leq 3$ be defined by $f_i=\sum_{n\geq 1}j_{n,i}x^n$.  Then the preceding recurrences may be rewritten in terms of the $f_i$ as
\begin{align*}
(1-px)f_1&=x+x(f_2+f_3),\\
(1-px)f_2&=x+x(qf_1+f_3),\\
(1-px)f_3&=x+qxf_2.
\end{align*}
Solving the preceding system for $f_1$, $f_2$ and $f_3$ yields
\begin{align*}
f_1&=\frac{x(1+(1-p)x)^2}{1-3px+(3p^2-2q)x^2+(2pq-p^3-q^2)x^3},\\
f_2&=\frac{x(1+(1+q-2p)x+(p-1)(p-q)x^2)}{1-3px+(3p^2-2q)x^2+(2pq-p^3-q^2)x^3},\\
f_3&=\frac{x(1+(q-2p)x+(p^2+q^2-pq-q)x^2)}{1-3px+(3p^2-2q)x^2+(2pq-p^3-q^2)x^3}.
\end{align*}

Let $f(x)=f(x;p,q)$ be given by $f(x)=1+\sum_{n\geq 1}j_nx^n=1+f_1(x)+f_2(x)+f_3(x)$.  We then have the following explicit formula for $f(x)$.

\begin{theorem}\label{Fnpoly}
The generating function for the joint distribution of the level and ascent statistics on $\mathcal{J}_n$ for $n \geq 0$ is given by
$$\frac{(1+(1-p)x)^3}{1-3px+(3p^2-2q)x^2+(2pq-p^3-q^2)x^3}.$$
\end{theorem}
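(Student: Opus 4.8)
The plan is to read off $f(x)$ directly from the decomposition $f(x)=1+f_1(x)+f_2(x)+f_3(x)$, using the closed forms for $f_1$, $f_2$, $f_3$ already recorded just above the statement. All three share the common denominator $D(x)=1-3px+(3p^2-2q)x^2+(2pq-p^3-q^2)x^3$, which is exactly the determinant of the coefficient matrix
$$\begin{pmatrix} 1-px & -x & -x \\ -qx & 1-px & -x \\ 0 & -qx & 1-px \end{pmatrix}$$
of the linear system $(1-px)f_1=x+x(f_2+f_3)$, $(1-px)f_2=x+x(qf_1+f_3)$, $(1-px)f_3=x+qxf_2$; expanding this determinant along the first column gives $(1-px)^3-2qx^2+2pqx^3-q^2x^3=D(x)$, and Cramer's rule then re-derives the three displayed numerators, so one may take the formulas for $f_1,f_2,f_3$ as given. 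The only remaining task is to place $1,f_1,f_2,f_3$ over $D(x)$ and identify the resulting numerator.

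Write $N_i(x)$ for the numerator of $f_i(x)$, so that $N_1=x(1+(1-p)x)^2$, $N_2=x(1+(1+q-2p)x+(p-1)(p-q)x^2)$ and $N_3=x(1+(q-2p)x+(p^2+q^2-pq-q)x^2)$. Then $f(x)=\dfrac{D(x)+N_1(x)+N_2(x)+N_3(x)}{D(x)}$, and the key step is to expand $D(x)+N_1(x)+N_2(x)+N_3(x)$ and collect powers of $x$. I expect the constant term to be $1$, the coefficient of $x$ to be $3(1-p)$, the coefficient of $x^2$ to be $3(1-p)^2$, and the coefficient of $x^3$ to be $(1-p)^3$; by the binomial theorem the numerator is therefore $(1+(1-p)x)^3$, which is the asserted formula.

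There is no substantive obstacle beyond careful polynomial bookkeeping in the two parameters $p$ and $q$; the only mild trap is sign-tracking when expanding $(p-1)(p-q)$ and the degree-two coefficient of $N_3$. As a sanity check one can set $p=q=1$: then $D(x)=1-3x+x^2$ and the numerator collapses to $1$, recovering $f(x)=\frac{1}{1-3x+x^2}=\sum_{n\ge0}F_{2n+2}x^n$, consistent with $j_n(1,1)=|\mathcal{J}_n|=F_{2n+2}$.
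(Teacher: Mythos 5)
Your proposal is correct and follows essentially the same route as the paper, which likewise obtains the theorem by summing $1+f_1+f_2+f_3$ over the common denominator after solving the linear system for the $f_i$. The coefficient bookkeeping you outline does check out (in particular the $x^3$ coefficient collapses to $-p^3+3p^2-3p+1=(1-p)^3$ with all $q$-terms cancelling), so the numerator is indeed $(1+(1-p)x)^3$.
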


Taking $p=q=1$ in Theorem \ref{Fnpoly} gives $f(x;1,1)=\frac{1}{1-3x+x^2}=\sum_{n\geq 0}F_{2n+2}x^n$, as required.

Setting $p$ or $q$ equal to zero or differentiation with respect to $p$ or $q$ in Theorem \ref{Fnpoly} yields respectively the following results.

\begin{corollary}\label{Fnpolycor1}
There are $F_{n+3}$ members of $\mathcal{J}_n$ for $n \geq 1$ that have no levels and $\binom{n+2}{2}$ that have no ascents.
\end{corollary}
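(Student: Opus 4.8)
The plan is to specialize the generating function $f(x)=f(x;p,q)$ of Theorem~\ref{Fnpoly} in two ways and read off the coefficients. For the number of level-free words, I would set $p=0$ and $q=1$, so that $f(x;0,1)=\sum_{n\geq 0}\bigl(\#\{\pi\in\mathcal{J}_n:\text{lev}(\pi)=0\}\bigr)x^n$. Substituting into the formula in Theorem~\ref{Fnpoly} gives $f(x;0,1)=\frac{(1+x)^3}{1-2x^2-x^3}$. The key simplification is the polynomial identity $1-2x^2-x^3=(1+x)(1-x-x^2)$, which cancels one factor of $1+x$ and reduces this to $\frac{(1+x)^2}{1-x-x^2}$. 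Using the standard Fibonacci generating function $\sum_{n\geq 0}F_nx^n=\frac{x}{1-x-x^2}$, a short computation gives $\sum_{n\geq 0}F_{n+3}x^n=\frac{2+x}{1-x-x^2}$, whence $\frac{(1+x)^2}{1-x-x^2}=1+\sum_{n\geq 1}F_{n+3}x^n$; comparing coefficients of $x^n$ for $n\geq 1$ yields the first assertion. The shift of $1$ in the constant term reflects that $F_3=2$ while $|\mathcal{J}_0|=1$, which is why the formula is stated only for $n\geq1$.

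For the number of ascent-free words, I would set $q=0$ and $p=1$, so that $f(x;1,0)=\sum_{n\geq 0}\bigl(\#\{\pi\in\mathcal{J}_n:\text{asc}(\pi)=0\}\bigr)x^n$. When $q=0$ the denominator in Theorem~\ref{Fnpoly} collapses to $1-3px+3p^2x^2-p^3x^3=(1-px)^3$ and the numerator is $(1+(1-p)x)^3$, so that $f(x;p,0)=\left(\frac{1+(1-p)x}{1-px}\right)^3$; at $p=1$ this is $\frac{1}{(1-x)^3}=\sum_{n\geq 0}\binom{n+2}{2}x^n$, and extracting the coefficient of $x^n$ gives the second assertion (which in fact also holds at $n=0$).

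There is no real obstacle here: the only steps needing care are verifying the factorization $1-2x^2-x^3=(1+x)(1-x-x^2)$ that powers the first simplification and the small piece of bookkeeping reconciling the $n=0$ term with the stated range $n\geq 1$. As an independent sanity check, one may note directly that an ascent-free member of $\mathcal{J}_n$ is simply a weakly decreasing word over $[3]$ --- such a word automatically avoids 1-3 --- so the count $\binom{n+2}{2}$ also follows by stars and bars.
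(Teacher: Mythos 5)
Your proposal is correct and follows the paper's own derivation of this corollary, namely specializing the generating function of Theorem \ref{Fnpoly} at $(p,q)=(0,1)$ and $(1,0)$; the factorization $1-2x^2-x^3=(1+x)(1-x-x^2)$ and the identification $\frac{(1+x)^2}{1-x-x^2}=1+\sum_{n\geq1}F_{n+3}x^n$ both check out. Your closing observation that ascent-free members of $\mathcal{J}_n$ are exactly the weakly decreasing ternary words also coincides with the combinatorial argument the paper gives for that half of the corollary in its final section.
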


\begin{corollary}\label{Fnpolycor2}
There are altogether $(n-1)F_{2n}$ levels and $\frac{2(n-1)L_{2n}-4F_{2n-2}}{5}$ ascents in all the members of $\mathcal{J}_n$ for $n \geq 1$, where $L_n=F_{n+1}+F_{n-1}$ denotes the $n$-th Lucas number.
\end{corollary}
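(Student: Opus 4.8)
The plan is to obtain both counts from Theorem~\ref{Fnpoly} by differentiating the generating function $f(x;p,q)$ with respect to the relevant variable, setting $p=q=1$, and extracting the coefficient of $x^n$. Write $f(x;p,q)=N/D$ with $N=(1+(1-p)x)^3$ and $D=1-3px+(3p^2-2q)x^2+(2pq-p^3-q^2)x^3$, and note that at $p=q=1$ we have $N=1$ and $D=1-3x+x^2$, so that $f(x;1,1)=\frac{1}{1-3x+x^2}=\sum_{n\geq0}F_{2n+2}x^n$, in agreement with $j_n(1,1)=|\mathcal{J}_n|=F_{2n+2}$.

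For the level count, I would first compute $\frac{\partial}{\partial p}f(x;p,q)\big|_{p=q=1}$. Differentiating $N$ in $p$ and setting $p=1$ gives $-3x$, while $\frac{\partial}{\partial p}D\big|_{p=q=1}=-3x+6x^2-x^3$, so the quotient rule yields
$$\frac{\partial}{\partial p}f(x;p,q)\Big|_{p=q=1}=\frac{-3x(1-3x+x^2)+3x-6x^2+x^3}{(1-3x+x^2)^2}=\frac{x^2(3-2x)}{(1-3x+x^2)^2}.$$
It then remains to identify the right-hand side as $\sum_{n\geq1}(n-1)F_{2n}x^n$. Starting from $\sum_{n\geq0}F_{2n}x^n=\frac{x}{1-3x+x^2}$ (immediate from $F_{2n}=3F_{2n-2}-F_{2n-4}$ together with $F_0=0$ and $F_2=1$), I would apply the operator $x\frac{d}{dx}$ to obtain $\sum_{n\geq0}nF_{2n}x^n=\frac{x(1-x^2)}{(1-3x+x^2)^2}$, and then subtract $\sum_{n\geq0}F_{2n}x^n=\frac{x(1-3x+x^2)}{(1-3x+x^2)^2}$, which produces exactly $\frac{x^2(3-2x)}{(1-3x+x^2)^2}$. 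Hence the total number of levels over $\mathcal{J}_n$ is $(n-1)F_{2n}$.

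For the ascent count, since $N$ is free of $q$, we get $\frac{\partial}{\partial q}f(x;p,q)\big|_{p=q=1}=-\frac{1}{D^2}\frac{\partial D}{\partial q}\big|_{p=q=1}=\frac{2x^2}{(1-3x+x^2)^2}$, using $\frac{\partial}{\partial q}D\big|_{p=q=1}=-2x^2$. To extract the coefficient, I would factor $1-3x+x^2=(1-\alpha x)(1-\beta x)$ with $\alpha=\frac{3+\sqrt5}{2}$ and $\beta=\frac{3-\sqrt5}{2}$, so that $\alpha\beta=1$ and $\alpha-\beta=\sqrt5$, and carry out the partial-fraction expansion of $\frac{1}{(1-\alpha x)^2(1-\beta x)^2}$. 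A short computation gives coefficients $\frac{\alpha^2}{5}$ and $\frac{\beta^2}{5}$ on the squared factors and $-\frac{2\alpha}{5\sqrt5}$ and $\frac{2\beta}{5\sqrt5}$ on the simple factors, so that, using $F_{2m}=\frac{\alpha^m-\beta^m}{\sqrt5}$ and $L_{2m}=\alpha^m+\beta^m$,
$$[x^n]\frac{1}{(1-3x+x^2)^2}=\frac{(n+1)L_{2n+4}-2F_{2n+2}}{5}.$$
Multiplying by $2x^2$ shifts the index down by two, whence the total number of ascents over $\mathcal{J}_n$ equals $[x^n]\frac{2x^2}{(1-3x+x^2)^2}=\frac{2(n-1)L_{2n}-4F_{2n-2}}{5}$, as claimed.

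The only step calling for a little care is the final coefficient extraction for ascents; the Lucas-number form comes out most cleanly via the partial-fraction computation above, though one could instead verify by induction that $\frac{2(n-1)L_{2n}-4F_{2n-2}}{5}$ satisfies the order-four linear recurrence with characteristic polynomial $(1-3x+x^2)^2=1-6x+11x^2-6x^3+x^4$ and the correct initial values, invoking $L_{2n}=3L_{2n-2}-L_{2n-4}$ and $F_{2n}=3F_{2n-2}-F_{2n-4}$. The level computation is routine once the generating function $\frac{x^2(3-2x)}{(1-3x+x^2)^2}$ is obtained.
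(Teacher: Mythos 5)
Your computation is correct throughout: the derivatives $\frac{\partial}{\partial p}f\big|_{p=q=1}=\frac{x^2(3-2x)}{(1-3x+x^2)^2}$ and $\frac{\partial}{\partial q}f\big|_{p=q=1}=\frac{2x^2}{(1-3x+x^2)^2}$ are right, and your coefficient extractions (via $x\frac{d}{dx}$ applied to $\sum F_{2n}x^n$ for the levels, and the partial-fraction expansion over $\alpha=\frac{3+\sqrt5}{2}$, $\beta=\frac{3-\sqrt5}{2}$ for the ascents) check out against small cases. This is exactly the derivation the paper indicates for the corollary, namely differentiation of the generating function of Theorem~\ref{Fnpoly}, so in that sense you match the paper's route. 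The paper, however, also supplies a fully combinatorial proof in its final section: for levels, a marked level in a member of $\mathcal{J}_n$ is produced by doubling any one of the $n-1$ letters of a word in $\mathcal{J}_{n-1}$, giving $(n-1)F_{2n}$ directly; for ascents, a word with a marked ascent decomposes as $\lambda'\underline{x}\lambda''$ with $x\in\{12,23\}$ and $\lambda',\lambda''$ unrestricted, giving $2\sum_{i=1}^{n-1}F_{2i}F_{2n-2i}$, which is then converted to the stated form via the identity $5\sum_{i=1}^{n-1}F_{2i}F_{2n-2i}=(n-1)L_{2n}-2F_{2n-2}$. The combinatorial argument explains \emph{why} the convolution $\frac{2x^2}{(1-3x+x^2)^2}$ appears (it is $2x^2$ times the square of the counting series), whereas your partial-fraction computation is the more mechanical but entirely rigorous way to land on the Lucas-number form; either is acceptable, and your closing remark about verifying the order-four recurrence is a reasonable alternative to the partial fractions.
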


Finally, let $\text{des}(\pi)$ denote the number of \emph{descents}, i.e., indices $i \in [n-1]$ such that $\pi_i>\pi_{i+1}$ in $\pi=\pi_1\cdots \pi_n$.  Since $\text{lev}(\pi)+\text{asc}(\pi)+\text{des}(\pi)=n-1$ for all $\pi$ of length $n$, we have that the generating function for the joint distribution of the level and descent statistics on $\mathcal{J}_n$ for $n \geq 1$ is given by
\begin{equation}\label{deseq}
(1/q)(f(qx;p/q,1/q)-1)=\frac{3x+(3q-6p+2)x^2+(3p^2+q^2-3pq-2p+1)x^3}{1-3px+(3p^2-2q)x^2+(2pq-p^3-q)x^3}.
\end{equation}
Taking $q=0$ in \eqref{deseq} implies that there are $\binom{n+2}{2}-n+1$ members of $\mathcal{J}_n$ that have no descents, i.e., are weakly increasing, and differentiation with respect to $q$ implies that the total number of descents in all the members of $\mathcal{J}_n$ is given by $\frac{(n-1)L_{2n+1}+4F_{2n-2}}{5}$.  These formulas may also be explained combinatorially by extending the arguments given for Corollaries \ref{Fnpolycor1} and \ref{Fnpolycor2} in the final section.

\emph{Remark:} Let $\text{tot}(x)$ denote the total number of occurrences of a subword $x$ over all members of $\mathcal{J}_n$, where $x$ is one of either level, ascent or descent.  Then, from the formulas found above, one can show
$$\text{tot}(\text{lev})>\text{tot}(\text{asc})>\text{tot}(\text{des}), \qquad n \geq 5.$$
That $\text{tot}(\text{lev})>\text{tot}(\text{asc})$ follows intuitively from the fact that any letter in $\{1,2,3\}$ can start a level, whereas only 1 or 2 can start an ascent (with only one option for the ascent top in either case as 1-3 is forbidden).  Further, $\text{tot}(\text{asc})>\text{tot}(\text{des})$ is seen to hold since 1 is more likely the second letter of a descent in $\mathcal{J}_n$ than is 2, with 1 subsequently only having two choices for its successor rather than three for a 2.  On the other hand, both possibilities for the second letter in an ascent, namely 2 or 3, have three choices for their successor.

\section{Distribution of $a(a+s)$ on set partitions}\label{partitionsect}

We consider in this section the statistic on $\mathcal{P}_n$ tracking the number of occurrences of $a(a+s)$, where $s \geq 2$ and members of $\mathcal{P}_n$ are identified in terms of their sequential forms.  Our first result is a formula for the generating function of the distribution of this statistic on $\mathcal{P}_{n,k}$ for a fixed $k$, which we will denote by $P_k=P_k(x;q)$. Note that we may restrict attention to the case $k \geq s+1$ since clearly no member of $\mathcal{P}_{n,k}$ where $k \in [s]$ can contain an occurrence of $a(a+s)$.

\begin{theorem}\label{gfpartit}
Let $k \geq s+1$ and $s \geq 2$ be fixed, with $k=k'+k''s$ where $1 \leq k' \leq s$ and $k''\geq 1$.  Then the generating function of the distribution on $\mathcal{P}_{n,k}$ for $n \geq k$ for the number of occurrences of the string $a(a+s)$ is given by
\begin{align}
P_k(x;q)&=\frac{x^k(1+(q-1)x)(1+(1-q))x)^{k-s+1}\left(1-\left((q-1)x\right)^{k''+1}\right)^{k'-1}}{\prod_{j=1}^s(1-jx)}\notag\\
 &\quad \times \prod_{\ell=1}^{k''-1}\left(1-\left((q-1)x\right)^{\ell+1}\right)^{s-1}\left(1-\left((q-1)x\right)^{\ell+2}\right)\times \prod_{j=s+1}^k D_j(x;q),\label{gfpartite1}
 \end{align}
 where $D_j(x;q)$ for $j \geq s+1$ is given by
$$  D_j(x;q)=\frac{1}{1-(j-2+2q)x-(j+s-1+q)(1-q)x^2+(j'(1+(1-q)x)-s)x\left((q-1)x\right)^{j''+1}},$$
 with $j=j'+j''s$ where $1 \leq j' \leq s$ and $j''\geq 1$.
 \end{theorem}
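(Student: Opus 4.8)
The plan is to exploit the block decomposition of a restricted growth function $\pi\in\mathcal{P}_{n,k}$, writing $\pi=\pi^{(1)}\pi^{(2)}\cdots\pi^{(k)}$, where $\pi^{(j)}$ records the letters of $\pi$ from the first occurrence of $j$ up to (but not including) the first occurrence of $j+1$. Thus $\pi^{(j)}$ is a $j$-ary word beginning with $j$, and $\pi^{(j)}=jw$ for an arbitrary $w\in[j]^{*}$. An occurrence of $a(a+s)$ in $\pi$ is then of exactly one of two kinds: it is \emph{internal} to some $\pi^{(j)}$ (equivalently, it lies inside the word $w$, since the step from the leading $j$ to the first letter of $w$ can never be a rise of size $s$), or it straddles the boundary between $\pi^{(j-1)}$ and $\pi^{(j)}$, and in the latter case it occurs precisely when the final letter of $\pi^{(j-1)}$ equals $j-s$ (which forces $j\ge s+1$). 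This gives a clean, bijective partition of the set of occurrences being counted.

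Accordingly, I would introduce, for $1\le i\le j$, the generating function $u_{j,i}=u_{j,i}(x;q)$ for the prefixes $\pi^{(1)}\cdots\pi^{(j)}$ that end in the letter $i$, with $x$ marking length and $q$ marking the number of occurrences of $a(a+s)$ lying within the prefix; then $P_k=\sum_{i=1}^k u_{k,i}$. Appending $\pi^{(j)}=jw$ to a prefix ending in $p$ contributes a factor $x$ for the leading $j$, a factor $q$ exactly when $p=j-s$ (the straddling occurrence), and then a free $j$-ary word $w$ whose internal rises of size $s$ are governed by the same transition rule as in the proof of Lemma~\ref{lem1} (appending a letter $i$ after a letter $p$ costs $x$, or $qx$ when $i-p=s$). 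Since the state just after the leading $j$ is ``last letter $=j$'', only one row of the resolvent $R^{(j)}=(I-M^{(j)})^{-1}$ of the associated $j\times j$ transfer matrix is needed, and one obtains
\begin{align*}
u_{j,i}&=x\bigl(P_{j-1}+(q-1)u_{j-1,j-s}\bigr)\,R^{(j)}_{j,i},\\
P_{j}&=x\bigl(P_{j-1}+(q-1)u_{j-1,j-s}\bigr)\,\Phi_{j},\qquad \Phi_j:=\textstyle\sum_i R^{(j)}_{j,i},
\end{align*}
where $u_{j-1,j-s}$ is read as $0$ when $j\le s$. The entries $R^{(j)}_{j,i}$ can be evaluated exactly as in the proof of Theorem~\ref{th1}, via Cramer's rule together with Lemma~\ref{lem2}: they are polynomials in $(q-1)x$ (with numerators of the form $1-((q-1)x)^{m+1}$) divided by $1+(1-q)x$. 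Moreover $x\Phi_j$ is the generating function for $j$-ary words beginning with $j$, which via the complement-and-reverse bijection $w\mapsto\overline{w^{R}}$ equals the generating function $A_1^{(j,s)}(x)$ for $j$-ary words ending in $1$; one computes $A_1^{(j,s)}(x)=xA^{(j,s)}(x)$, so $\Phi_j=A^{(j,s)}(x)=(1+(1-q)x)^2 D_j(x;q)$. This is the source of the factors $D_j(x;q)$ for $s+1\le j\le k$ in \eqref{gfpartite1}.

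It remains to unwind the recursion. The term $(q-1)u_{j-1,j-s}$ couples consecutive levels, so one follows the chain $u_{k-1,k-s},\,u_{k-2,k-1-s},\,\ldots,\,u_{s,1}$, each member obtained from the previous one through the appropriate entry $R^{(j)}_{j,\cdot}$; the chain terminates at $u_{s,1}$ because $u_{s-1,0}$ does not exist, and there $P_s=xP_{s-1}\Phi_s=\prod_{j=1}^s x/(1-jx)$, a plain count of set partitions. (One checks directly that $u_{s,1}=xP_s$, which launches the recursion, and indeed the case $k=s+1$ already reproduces \eqref{gfpartite1} with $k'=k''=1$.) Substituting back and collecting like factors — grouping the $D_j$'s and the auxiliary $R$-numerators by the value of $j''$, the complete levels $j''=1,\ldots,k''-1$ each yielding $\bigl(1-((q-1)x)^{j''+1}\bigr)^{s-1}\bigl(1-((q-1)x)^{j''+2}\bigr)$ and the truncated top level $j''=k''$ yielding $\bigl(1-((q-1)x)^{k''+1}\bigr)^{k'-1}$ — should reproduce \eqref{gfpartite1}, with $x^k$ coming from the $k$ leading block-letters and $\prod_{j=1}^s(1-jx)$ from the base of the recursion.

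The main obstacle is this last bookkeeping step: because of the coupling term the recursion does not telescope into a bare product, so one must resolve the chain of auxiliary generating functions and then verify, factor by factor, that the result matches the stated expression — in particular matching the exponents $k-s+1$, $k'-1$, and $s-1$, and checking that the partial cancellations among the $(1+(1-q)x)$-denominators of the $R^{(j)}_{j,\cdot}$ against the $(1+(1-q)x)^2$-numerators of the $D_j$'s leave exactly $(1+(1-q)x)^{k-s+1}$. A secondary but necessary point is the careful treatment of the low levels $j\le s$, where both internal and straddling occurrences are absent, so that the formula reduces correctly to ordinary set-partition counts there.
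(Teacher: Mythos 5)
Your proposal follows essentially the same route as the paper: decompose $\pi$ into the blocks $\pi^{(1)}\cdots\pi^{(k)}$, separate internal from straddling occurrences of $a(a+s)$, and import the Cramer's-rule machinery of Section 2 (your $R^{(j)}_{j,i}$ is the paper's $x\det(\mathbf{B}_{i;i})$ and your $\Phi_j$ is $E_j(x)=(1+(1-q)x)^2D_j(x;q)$), so that each block contributes a factor $xE_j(x)\bigl(1+(q-1)x\det(\mathbf{B}_{j-s+1;j-s+1})\bigr)$. The coupling you flag as the main obstacle dissolves because $u_{j,i}=P_j\,R^{(j)}_{j,i}/\Phi_j$, so the recursion telescopes into exactly the paper's product \eqref{gfpartite2}, and the remaining bookkeeping is precisely what the paper carries out by evaluating $1+(q-1)x\det(\mathbf{B}_{\cdot;\cdot})$ via \eqref{detB} and grouping the factors by the residue of $j$ modulo $s$.
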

\begin{proof}
Let $\pi \in \mathcal{P}_{n,k}$ be represented as $\pi=\pi^{(1)}\cdots \pi^{(k)}$, where each $\pi^{(i)}$ is $i$-ary and starts with $i$.  Note first that for each $i \in [s-1]$, the contribution of $\pi^{(i)}$ towards $P_k$ is given by $\frac{x}{1-ix}$.  If $i=s$, then words $\pi^{(s)}$ ending in $1$ contribute an extra factor of $q$ due to $\pi^{(s)}$ being directly followed by an $s+1$.  Hence, by subtraction, we have that $\pi^{(s)}$ contributes a factor of $\frac{x+(q-1)x^2}{1-sx}$.  Define
$$E_j(x)=\frac{(1+(1-q)x)^2}
{1-(j-2+2q)x-(j+s-1+q)(1-q)x^2+(j'(1+(1-q)x)-s)x((q-1)x)^{j''+1}}$$
and
$$E_j'(x)=E_j(x)\left(1+(q-1)x\det({\bf B}_{j-s+1;j-s+1})\right),$$
for $j\geq s+1$, where $j=j'+j''s$ is as described.  By \eqref{th1e0} and \eqref{eqt0}, it is seen that $xE_j'(x)$ gives the contribution towards $P_k$ of the section $\pi^{(j)}$ for $s<j<k$, with $\pi^{(k)}$ contributing $xE_k(x)$.  To see this, let $\pi^{(j)}=j\rho$ where $s<j<k$, and we must subtract the possibility of $\rho$ ending in $j-s+1$, which has generating function $xE_j(x)\det({\bf{B}}_{j-s+1;j-s+1})$.  This term must then be added back with an extra factor of $q$ to account for the occurrence of $a(a+s)$ whose first letter is the last of $\rho$.    Combining the various cases above, we have
\begin{equation}\label{gfpartite2}
P_k(x;q)=\frac{x^k(1+(q-1)x)E_k(x)}{\prod_{j=1}^s(1-jx)}\times \prod_{\ell=1}^{k''-1}\prod_{i=1}^sE'_{\ell s+i}(x)\times \prod_{r=1}^{k'-1}E'_{k''s+r}(x).
\end{equation}

Now observe that if $1 \leq \ell \leq k''-1$ and $1 \leq i \leq s-1$, then
\begin{align*}
&1+(q-1)x\det({\bf{B}}_{(\ell-1)s+i+1;(\ell-1)s+i+1})=1+(q-1)x\left(\frac{1-((q-1)x)^{\ell}}{1-(q-1)x}\right)=\frac{1-((q-1)x)^{\ell+1}}{1-(q-1)x},
\end{align*}
by \eqref{detB}. If $1 \leq \ell \leq k''-1$ and $i=s$, then we get
$$1+(q-1)x\det({\bf{B}}_{\ell s+1;\ell s+1})=1+(q-1)x\left(\frac{1-((q-1)x)^{\ell+1}}{1-(q-1)x}\right)=\frac{1-((q-1)x)^{\ell+2}}{1-(q-1)x}.$$
Thus, we have
\small\begin{align*}
&\prod_{\ell=1}^{k''-1}\prod_{i=1}^sE'_{\ell s+i}(x)=\prod_{\ell=1}^{k''-1}\left(1-((q-1)x)^{\ell+1}\right)^{s-1}\left(1-((q-1)x)^{\ell+2}\right)(1+(1-q)x)^s\\
        &\quad\times\prod_{\ell=1}^{k''-1}\prod_{i=1}^s\frac{1}{1-(\ell s+i-2+2q)x-((\ell+1)s+i-1+q)(1-q)x^2+(i(1+(1-q)x)-s)x\left((q-1)x\right)^{\ell+1}}\\
        &=(1+(1-q)x)^{k-k'-s}\prod_{\ell=1}^{k''-1}\left(1-((q-1)x)^{\ell+1}\right)^{s-1}\left(1-((q-1)x)^{\ell+2}\right)\times\prod_{j=s+1}^{k-k'}D_j(x;q).
\end{align*}
\normalsize By similar reasoning, we have
\small\begin{align*}
&\prod_{r=1}^{k'-1}E'_{k''s+r}(x)\\
&=\prod_{r=1}^{k'-1}\frac{(1+(1-q)x)\left(1-((q-1)x)^{k''+1}\right)}{1-(k''s+r-2+2q)x-((k''+1)s+r-1+q)(1-q)x^2+(r(1+(1-q)x)-s)x\left((q-1)x\right)^{k''+1}}\\
&=(1+(1-q)x)^{k'-1}\left(1-((q-1)x)^{k''+1}\right)^{k'-1}\prod_{j=k-k'+1}^{k-1}D_j(x;q),
\end{align*}
\normalsize with $E_k(x)=(1+(1-q)x)^2D_k(x;q)$.
Formula \eqref{gfpartite1} now follows from \eqref{gfpartite2} and combining the expressions found above for the various components of the product.\end{proof}

Note that \eqref{gfpartite1} recovers the well-known generating function formula $\sum_{n\geq k}S(n,k)x^n=\frac{x^k}{\prod_{j=1}^k(1-jx)}$ when $q=1$. Differentiation with respect to $q$ yields the following result.

\begin{corollary}\label{totPnk}
The total number of occurrences of the form $a(a+s)$ in all the members of $\mathcal{P}_{n,k}$, where $k \geq s+1$ and $s \geq 2$, is given by
$$(k-s)S(n-1,k)+\sum_{j=s+1}^k\sum_{t=0}^{n-k-2}S(n-t-2,k)j^t(j-s), \qquad n \geq k+1.$$
\end{corollary}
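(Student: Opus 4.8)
The plan is to extract the total occurrence count from the generating function $P_k(x;q)$ of Theorem~\ref{gfpartit} by logarithmic differentiation with respect to $q$ at $q=1$, exactly as was done for $k$-ary words in the passage leading to Corollary~\ref{cor2}. First I would write $P_k(x;q)$ in the factored form \eqref{gfpartite2}, namely as a product of $\frac{x^k(1+(q-1)x)E_k(x)}{\prod_{j=1}^s(1-jx)}$ with the double product over the $E'_{\ell s+i}(x)$ and the product over the $E'_{k''s+r}(x)$. Since $\frac{\partial}{\partial q}\log P_k|_{q=1}=\sum (\text{contributions from each factor})$, and since at $q=1$ we have $P_k(x;1)=\frac{x^k}{\prod_{j=1}^k(1-jx)}$ (the classical Stirling generating function), it follows that $\frac{\partial}{\partial q}P_k(x;q)|_{q=1}=\frac{x^k}{\prod_{j=1}^k(1-jx)}\cdot\left(\frac{\partial}{\partial q}\log P_k|_{q=1}\right)$. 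So the task reduces to computing the logarithmic $q$-derivative of each individual factor at $q=1$ and summing.

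The key computational steps are then: (i) the factor $1+(q-1)x$ contributes $\frac{\partial}{\partial q}\log(1+(q-1)x)|_{q=1}=x$, corresponding to the lone word $\pi^{(s)}$; (ii) each factor $E'_j(x)$ (for $s<j<k$) and $E_k(x)$ contributes its own logarithmic $q$-derivative. The cleanest route is to observe that $E'_j(x)$ (resp.\ $E_k(x)$) is, up to the harmless factor $(1+(1-q)x)$ which vanishes to first order in the log-derivative in a controlled way, exactly the generating function $A(x)$ of Theorem~\ref{th1} for $(j{-}1)$-ary (resp.\ $k$-ary) words augmented appropriately — more precisely, $xE'_j(x)$ is the contribution of a section that behaves like a $j$-ary word. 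Indeed, from the computation preceding Corollary~\ref{cor2} we already know $\frac{\partial}{\partial q}A^{(j,s)}(x)|_{q=1}=\frac{(j-s)x^2}{(1-jx)^2}$ when $j\geq s+1$ (and $0$ otherwise), so $\frac{\partial}{\partial q}\log A^{(j,s)}(x)|_{q=1}=(j-s)x^2(1-jx)^2/(1-jx)^2 \cdot \frac{1}{x^2}\cdot\frac{1}{\ldots}$; carefully, $\frac{\partial}{\partial q}\log A^{(j,s)}|_{q=1}=\frac{(j-s)x^2}{(1-jx)^2}\big/\frac{1}{1-jx}=\frac{(j-s)x^2}{1-jx}=\sum_{t\geq 0}(j-s)x^{t+2}j^t$. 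This is precisely where the inner sum $\sum_{t=0}^{n-k-2}S(n-t-2,k)j^t(j-s)$ will come from after multiplying by $\frac{x^k}{\prod(1-jx)}=\sum_n S(n,k)x^n$ and reading off the coefficient of $x^n$.

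The remaining piece is the $(k-s)S(n-1,k)$ term. I expect this to arise from the factor $1+(q-1)x$ together with possibly a residual contribution from reconciling the $(1+(1-q)x)$ prefactors: the term $x$ from (i) above, when multiplied by $\sum_n S(n,k)x^n$, gives $\sum_n S(n-1,k)x^n$, but one needs to track the coefficient $k-s$ carefully. Actually, I anticipate the coefficient $k-s$ emerges because the section $\pi^{(s)}$ can be followed by $\pi^{(s+1)},\dots,\pi^{(k)}$, each contributing one potential adjacency of the relevant type, but on closer inspection the $k-s$ likely comes from combining the $j=k$ (i.e.\ $E_k$) boundary contribution $\frac{(k-s)x^2}{1-kx}$ with the $x$ from the $1+(q-1)x$ factor — since $x + \frac{(k-s)x^2}{1-kx} = x\cdot\frac{1-kx+(k-s)x}{1-kx}=x\cdot\frac{1-sx}{1-kx}$, and when multiplied by $\frac{x^k}{\prod_{j=1}^k(1-jx)}$ the factor $\frac{1-sx}{1-kx}\cdot x$ rearranges to absorb one $(1-sx)$... this is the step I would need to do with care. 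So concretely: I would compute $\frac{\partial}{\partial q}\log P_k|_{q=1}$ as a sum of the elementary rational functions above, combine them over a common denominator, multiply by $\sum_n S(n,k)x^n$, and extract $[x^n]$, matching terms against Pascal-type recurrences for $S(n,k)$ to produce the stated closed form.

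The main obstacle I foresee is bookkeeping: correctly accounting for the multiplicities in the double product $\prod_{\ell=1}^{k''-1}\prod_{i=1}^s E'_{\ell s+i}$ together with the partial product $\prod_{r=1}^{k'-1}E'_{k''s+r}$ so that the indices $j$ appearing in $\frac{\partial}{\partial q}\log E'_j|_{q=1}=\frac{(j-s)x^2}{1-jx}$ range exactly over $s+1\leq j\leq k$ with each value occurring once — i.e.\ verifying that $\{\ell s+i : 1\leq\ell\leq k''-1,\ 1\leq i\leq s\}\cup\{k''s+r: 1\leq r\leq k'-1\}\cup\{k\}=\{s+1,\dots,k\}$, which follows from $k=k'+k''s$ — and then handling the $(1+(1-q)x)$-power prefactors, whose logarithmic $q$-derivatives at $q=1$ are each $-x$ but which are exactly cancelled by the $+x$ contributions hidden inside the $\det({\bf B})$ terms of each $E'_j$ (since $E'_j=E_j(1+(q-1)x\det({\bf B}_{j-s+1;j-s+1}))$ and at $q=1$ the bracket is $1$, with $q$-derivative $x\det({\bf B}_{j-s+1;j-s+1})|_{q=1}=x$). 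Once this cancellation is verified, the surviving contributions are precisely the $\frac{(j-s)x^2}{1-jx}$ for $j$ ranging over $\{s+1,\dots,k\}$ plus the single $x$ from $1+(q-1)x$, and the coefficient extraction is routine.
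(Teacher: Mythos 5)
Your overall strategy is the paper's own: differentiate $P_k(x;q)$ with respect to $q$ at $q=1$ (equivalently, sum the logarithmic $q$-derivatives of the factors), multiply by $P_k(x;1)=x^k/\prod_{i=1}^k(1-ix)$, and extract $[x^n]$. Your treatment of the double sum is correct: the index set $\{\ell s+i\}\cup\{k''s+r\}\cup\{k\}$ is exactly $\{s+1,\dots,k\}$, each $E_j$ contributes $\frac{\partial}{\partial q}\log E_j\mid_{q=1}=\frac{(j-s)x^2}{1-jx}$, and multiplying by $\sum_n S(n,k)x^n$ yields $\sum_{t=0}^{n-k-2}S(n-t-2,k)j^t(j-s)$.

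The gap is in the $(k-s)S(n-1,k)$ term, and it is a genuine one, not just bookkeeping you deferred. The cancellation you posit does not occur: in the factored form \eqref{gfpartite2} there are no standalone $(1+(1-q)x)$ prefactors to cancel against — the $(1+(1-q)x)^2$ numerators sit inside the $E_j$ and their $-2x$ contribution is already absorbed into $\frac{\partial}{\partial q}\log E_j\mid_{q=1}=\frac{(j-s)x^2}{1-jx}$. Consequently the $+x$ contributed by each bracket $1+(q-1)x\det({\bf B}_{j-s+1;j-s+1})$ survives: there are $k-1-s$ such brackets, one for each $s+1\leq j\leq k-1$, and each contributes $+x$ to the logarithmic derivative since the bracket equals $1$ at $q=1$ while its $q$-derivative there is $x\det({\bf B})\mid_{q=1}=x$ by \eqref{detB}. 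Added to the $+x$ from the factor $1+(q-1)x$, these give exactly $(k-s)x$, whence $(k-s)S(n-1,k)$. As written, your tally is $x+\sum_{j=s+1}^k\frac{(j-s)x^2}{1-jx}$, which would produce $S(n-1,k)$ in place of $(k-s)S(n-1,k)$, and neither of your two guesses for recovering the factor $k-s$ (merging the lone $x$ with the $j=k$ term, or the $\frac{1-sx}{1-kx}$ rearrangement) yields it. For comparison, the paper differentiates \eqref{gfpartite1} directly and obtains the bracket $-(k-s)x+\sum_{j=s+1}^k\frac{2x-(j+s)x^2}{1-jx}$, which simplifies to $(k-s)x+\sum_{j=s+1}^k\frac{(j-s)x^2}{1-jx}$ — the form your computation should have produced.
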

\begin{proof}
Differentiating both sides of \eqref{gfpartite1} with respect to $q$, and setting $q=1$, gives
\begin{equation}\label{totogf}
\frac{\partial}{\partial q}P_k(x;q)\mid_{q=1}=\frac{x^k}{\prod_{i=1}^k(1-ix)}\left(-(k-s)x+\sum_{j=s+1}^k\frac{2x-(j+s)x^2}{1-jx}\right).
\end{equation}
Extracting the coefficient of $x^n$, and manipulating the resulting summation formula, implies
\begin{align*}
&[x^n]\frac{\partial}{\partial q}P(x;q)\mid_{q=1}\\
&=-(k-s)S(n-1,k)+2\sum_{j=s+1}^k\sum_{t=0}^{n-k-1}S(n-t-1,k)j^t-\sum_{j=s+1}^k\sum_{t=0}^{n-k-2}S(n-t-2,k)j^t(j+s)\\
&=(k-s)S(n-1,k)+2\sum_{j=s+1}^k\sum_{t=1}^{n-k-1}S(n-t-1,k)j^t-\sum_{j=s+1}^k\sum_{t=1}^{n-k-1}S(n-t-1,k)j^t\\
&\quad-s\sum_{j=s+1}^k\sum_{t=0}^{n-k-2}S(n-t-2,k)j^t\\
&=(k-s)S(n-1,k)+\sum_{j=s+1}^k\sum_{t=0}^{n-k-2}S(n-t-2,k)j^{t+1}-s\sum_{j=s+1}^k\sum_{t=0}^{n-k-2}S(n-t-2,k)j^t\\
&=(k-s)S(n-1,k)+\sum_{j=s+1}^k\sum_{t=0}^{n-k-2}S(n-t-2,k)j^t(j-s),
\end{align*}
as desired.
\end{proof}

\emph{Remark:} Note that formulas \eqref{gfpartite1} and \eqref{gfpartite2} do not hold in the case $s=1$.  This is due in part to the fact that when $s=1$, both of the letters in an occurrence of $a(a+1)$ may be the first of their kind in the sequential representation of a partition.  To correct for this, formula \eqref{gfpartite2} would need to be changed to
$$\frac{qx^kE_k(x)}{1-x}\prod_{j=2}^{k-1}(q-1+E_j'(x)), \qquad k \geq 2,$$
with \eqref{gfpartite1} then adjusted appropriately, where $E_j(x)$ and $E_j'(x)$ are as in the proof of Theorem \ref{gfpartit}, but evaluated at $s=1$ (whence $j'=1$ and $j''=j-1$).  The preceding formula can be shown to equal the one found in \cite[Corollary~2.4]{MS1}, which is given by
$$\frac{x^k}{1-x\sum_{i=1}^k\frac{1-x^i(q-1)^i}{1-x(q-1)}}\prod_{j=1}^{k-1}\left(q-1+\frac{1-x^{j+1}(q-1)^{j+1}}{1-x(j+q)+x\left(\frac{1-x^{j+1}(q-1)^{j+1}}{1-x(q-1)}\right)}\right).$$
\medskip

We now seek an expression for the exponential generating function for the total number of occurrences of $a(a+s)$ on $\mathcal{P}_n$ and $\mathcal{P}_{n,k}$ for a fixed $s\geq 2$. To do so, let $P_k(x)=\frac{\partial}{\partial q}P_k(x,q)\mid_{q=1}$, and first note that by \eqref{totogf}, we have
$$(1-kx)P_k(x)-xP_{k-1}(x)=\frac{(1-sx)x^{k+1}}{\prod_{j=1}^k(1-jx)}, \qquad k \geq s+1,$$
with $P_0(x)=\cdots=P_s(x)=0$.

Now let $Q_k(x)=\sum_{n\geq0} [x^n](P_k(x))\cdot\frac{x^n}{n!}$ be the exponential generating function corresponding to $P_k(x)$. Then, by the facts $\sum_{n\geq k}S(n,k)x^n=\frac{x^k}{\prod_{j=1}^k(1-jx)}$ and $\sum_{n\geq k}S(n,k)\frac{x^n}{n!}=\frac{(e^x-1)^k}{k!}$, we have
$$\frac{d}{dx}Q_k(x)-kQ_k(x)-Q_{k-1}(x)=\frac{(e^x-1)^k}{k!}-s\int_0^x\frac{(e^t-1)^k}{k!}dt, \qquad k \geq s+1,$$
with $Q_0(x)=\cdots=Q_s(x)=0$.

Define $Q(x,y)=\sum_{k\geq s+1} Q_k(x)y^k$. Then
$$\frac{\partial}{\partial x}Q(x,y)-y\frac{\partial}{\partial y}Q(x,y)-yQ(x,y)=\sum_{k\geq s+1}\frac{(e^x-1)^ky^k}{k!}-s\int_0^x\sum_{k\geq s+1}\frac{(e^t-1)^ky^k}{k!}dt.$$
Define $Q'(x,y)=\frac{\partial}{\partial x}Q(x,y)$. Then
$$\frac{\partial}{\partial x}Q'(x,y)-y\frac{\partial}{\partial y}Q'(x,y)-yQ'(x,y)=\sum_{k\geq s+1}\frac{(e^x-1)^{k-1}e^xy^k}{(k-1)!}-s\sum_{k\geq s+1}\frac{(e^x-1)^ky^k}{k!}.$$
Define $H(v,u)=Q'(x,y)$ with $v=x$ and $u=e^vy$. Then
$$\frac{\partial}{\partial v}H(v,u)-ue^{-v}H(v,u)=\sum_{k\geq s+1}\frac{(e^v-1)^{k-1}e^{(1-k)v}u^k}{(k-1)!}-s\sum_{k\geq s+1}\frac{(e^v-1)^ke^{-kv}u^k}{k!},$$
which is equivalent to
$$\frac{\partial}{\partial v}\left(e^{ue^{-v}}H(v,u)\right)=e^{ue^{-v}}\sum_{k\geq s+1}\frac{(e^v-1)^{k-1}e^{(1-k)v}u^k}{(k-1)!}-se^{ue^{-v}}\sum_{k\geq s+1}\frac{(e^v-1)^ke^{-kv}u^k}{k!}.$$

Thus,
$$H(v,u)=e^{-ue^{-v}}\int_0^ve^{ue^{-t}}\left(\sum_{k\geq s+1}\frac{(e^t-1)^{k-1}e^{(1-k)t}u^k}{(k-1)!}-s\sum_{k\geq s+1}\frac{(e^t-1)^ke^{-kt}u^k}{k!}\right) dt,$$
which leads to
$$Q'(x,y)=e^{-y}\int_0^xe^{ye^{x-t}}\left(\sum_{k\geq s+1}\frac{(e^t-1)^{k-1}e^{t+k(x-t)}y^k}{(k-1)!}-s\sum_{k\geq s+1}\frac{(e^t-1)^ke^{k(x-t)}y^k}{k!}\right) dt.$$
Hence,
$$Q(x,y)=\int_0^x\int_0^re^{y(e^{r-t}-1)}\left(\sum_{k\geq s+1}\frac{(e^t-1)^{k-1}e^{t+k(r-t)}y^k}{(k-1)!}-s\sum_{k\geq s+1}\frac{(e^t-1)^ke^{k(r-t)}y^k}{k!}\right) dtdr.$$
By the fact $e^{y(e^x-1)}=\sum_{j\geq0}\frac{(e^x-1)^jy^j}{j!}$, we have
\begin{align*}
Q(x,y)&=\int_0^x\int_0^r\biggl(ye^r(e^{y(e^r-1)}-e^{y(e^{r-t}-1)}\sum_{k=0}^{s-1}\frac{(e^t-1)^ke^{k(r-t)}y^k}{k!})\\
&\qquad\qquad-s(e^{y(e^r-1)}-e^{y(e^{r-t}-1)}\sum_{k=0}^s\frac{(e^t-1)^ke^{k(r-t)}y^k}{k!})\biggr) dtdr\\
&=xe^{y(e^x-1)}-\int_0^x(1+sr)e^{y(e^r-1)}dr\\
&\qquad\qquad-\int_0^x\int_0^rye^{y(e^{r-t}-1)+r}\sum_{k=0}^{s-1}\frac{(e^t-1)^ke^{k(r-t)}y^k}{k!}dtdr\\
&\qquad\qquad+s\int_0^x\int_0^re^{y(e^{r-t}-1)}\sum_{k=0}^s\frac{(e^t-1)^ke^{k(r-t)}y^k}{k!}dtdr.
\end{align*}

We have the following result, where $exp_m(x)=\sum_{j=0}^m\frac{x^j}{j!}$.

\begin{theorem}\label{thqqs}
For all $s\geq2$,
\begin{align*}
Q(x,y)&=xe^{y(e^x-1)}-\int_0^x(1+sr)e^{y(e^r-1)}dr\\
&\quad-\int_0^x\int_0^re^{y(e^{r-t}-1)}\left(ye^rexp_{s-1}\left((e^t-1)e^{r-t}y\right)
-s\cdot exp_s\left((e^t-1)e^{r-t}y\right)\right)dtdr.
\end{align*}
\end{theorem}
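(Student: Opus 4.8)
The plan is to derive Theorem~\ref{thqqs} directly from the computation already carried out just above its statement, treating it as essentially a bookkeeping consolidation step. Everything hinges on the double-integral expression for $Q(x,y)$ obtained from the partial differential equation for $Q'(x,y)$; the theorem merely packages the final line of that derivation in a self-contained form using the $\exp_m$ notation, so the work is to verify that the two presentations agree.

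First I would recall the last displayed formula for $Q(x,y)$ in the four-term form: $xe^{y(e^x-1)}$, minus $\int_0^x(1+sr)e^{y(e^r-1)}dr$, minus the double integral with integrand $ye^{y(e^{r-t}-1)+r}\sum_{k=0}^{s-1}\frac{(e^t-1)^ke^{k(r-t)}y^k}{k!}$, plus $s$ times the double integral with integrand $e^{y(e^{r-t}-1)}\sum_{k=0}^{s}\frac{(e^t-1)^ke^{k(r-t)}y^k}{k!}$. The substance of the proof is then to observe that $\sum_{k=0}^{s-1}\frac{\left((e^t-1)e^{r-t}y\right)^k}{k!}=\exp_{s-1}\!\left((e^t-1)e^{r-t}y\right)$ and likewise $\sum_{k=0}^{s}\frac{\left((e^t-1)e^{r-t}y\right)^k}{k!}=\exp_{s}\!\left((e^t-1)e^{r-t}y\right)$, since $(e^t-1)^ke^{k(r-t)}y^k=\left((e^t-1)e^{r-t}y\right)^k$. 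Pulling the common factor $e^{y(e^{r-t}-1)}$ out of both double integrals and combining them into a single double integral with integrand $e^{y(e^{r-t}-1)}\bigl(ye^r\exp_{s-1}((e^t-1)e^{r-t}y)-s\,\exp_s((e^t-1)e^{r-t}y)\bigr)$ then yields exactly the claimed expression.

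To make the proof genuinely complete rather than a restatement, I would also spell out the two ingredients that justify the preceding derivation being valid: (i) the recurrence $(1-kx)P_k(x)-xP_{k-1}(x)=\frac{(1-sx)x^{k+1}}{\prod_{j=1}^k(1-jx)}$ with $P_0=\cdots=P_s=0$, which follows by reading off the coefficient identity implicit in \eqref{totogf} together with Corollary~\ref{totPnk}; and (ii) the translation of this recurrence for ordinary generating functions into the differential equation $\frac{d}{dx}Q_k(x)-kQ_k(x)-Q_{k-1}(x)=\frac{(e^x-1)^k}{k!}-s\int_0^x\frac{(e^t-1)^k}{k!}dt$, which uses the standard pair of identities $\sum_{n\geq k}S(n,k)x^n=\frac{x^k}{\prod_{j=1}^k(1-jx)}$ and $\sum_{n\geq k}S(n,k)\frac{x^n}{n!}=\frac{(e^x-1)^k}{k!}$ (the division by $1-kx$ on the ordinary side corresponds to $\frac{d}{dx}-k$ on the exponential side, and multiplication by $x$ corresponds to integration). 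I would then state that summing over $k\geq s+1$ against $y^k$, differentiating in $x$, and applying the change of variables $v=x$, $u=e^vy$ converts the PDE into the first-order linear ODE in $v$ whose integrating factor is $e^{ue^{-v}}$, exactly as displayed; solving and undoing the substitutions gives the formula for $Q'(x,y)$, and one integration in $x$ (noting $Q(0,y)=0$, and that the inner integrals were already written starting at $0$) gives $Q(x,y)$.

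The main obstacle is not conceptual but is the interchange-of-summation-and-integration justification needed at the step where $\sum_{k\geq s+1}\frac{(e^t-1)^{k-1}e^{t+k(x-t)}y^k}{(k-1)!}$ and $\sum_{k\geq s+1}\frac{(e^t-1)^ke^{k(x-t)}y^k}{k!}$ are recognized as $ye^r\bigl(e^{y(e^r-1)}-e^{y(e^{r-t}-1)}\exp_{s-1}((e^t-1)e^{r-t}y)\bigr)$ and $e^{y(e^r-1)}-e^{y(e^{r-t}-1)}\exp_{s}((e^t-1)e^{r-t}y)$ respectively, i.e.\ as full exponentials minus their low-order truncations; this is where the $\exp_m$ tails come from. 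Working in the ring of formal power series in $x$ and $y$ — which is the correct setting since $Q(x,y)$ is defined coefficientwise via $\sum_n [x^n]P_k(x)\frac{x^n}{n!}$ — all these manipulations are legitimate term by term, so I would simply note at the outset that all identities are understood as identities of formal power series and that the integrals $\int_0^x$ act formally (antidifferentiation with zero constant term), which removes any analytic convergence concern and makes the remaining steps purely algebraic.
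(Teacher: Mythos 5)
Your proposal is correct and follows essentially the same route as the paper: the paper's own ``proof'' of Theorem~\ref{thqqs} is precisely the derivation displayed immediately before it (recurrence for $P_k$, translation to the EGF differential equation, summation against $y^k$, the substitution $u=e^vy$ with integrating factor $e^{ue^{-v}}$, and the expansion of $e^{y(e^x-1)}$ into truncated exponential tails), with the theorem statement merely consolidating the two double integrals via the identity $(e^t-1)^ke^{k(r-t)}y^k=\bigl((e^t-1)e^{r-t}y\bigr)^k$. Your added remarks on the formal power series setting and the ODE/EGF dictionary are consistent with, though not explicitly spelled out in, the paper's treatment.
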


By finding the coefficient of $y^k$ in $Q(x,y)$, we obtain
\begin{align*}
Q_k(x)&=\frac{x(e^x-1)^k}{k!}-\int_0^x\frac{(1+sr)(e^r-1)^k}{k!}dr\\
&\quad-\frac{1}{(k-1)!}\sum_{j=0}^{s-1}\int_0^x\int_0^r\binom{k-1}{j}e^r(e^{r-t}-1)^{k-1-j}(e^t-1)^je^{j(r-t)}dtdr\\
&\quad+\frac{s}{k!}\sum_{j=0}^s\int_0^x\int_0^r\binom{k}{j}(e^{r-t}-1)^{k-j}(e^t-1)^je^{j(r-t)}dtdr,
\end{align*}
for all $k\geq s+1$ and $s \geq 2$.

Using Theorem \ref{thqqs}, one can find explicit formulas for the total number of occurrences of $a(a+s)$ in all the members of $\mathcal{P}_n$ for a fixed $s$.  It is given by the coefficient of $x^n/n!$ in $Q(x,1)$, which will be denoted by $q_n$.

\begin{example}
Theorem \ref{thqqs} with $s=2$ gives
$$\frac{d}{dx}Q(x,1)=xe^{e^x+x-1}-(2x+3)e^{e^x-1}+(e^x+2)\int_0^xe^{e^t-1}dt+e^x+2.$$
By comparing the coefficients of $x^{n-1}/(n-1)!$ for $n\geq2$ on both sides of the last equation, we obtain
$$q_n=1+(n-4)B_{n-1}-(2n-4)B_{n-2}
+\sum_{j=1}^{n-1}\binom{n-1}{j}B_{j-1}.$$
By the fact $1+\sum_{j=1}^{n-1}\binom{n-1}{j}B_{j-1}=\sum_{j=0}^{n-1}B_j$, this gives
\begin{equation}\label{s=2Qxy}
q_n=(n-3)B_{n-1}-(2n-5)B_{n-2}+\sum_{j=0}^{n-3}B_{j}, \qquad n \geq 2.
\end{equation}
\end{example}

\begin{example}
Theorem \ref{thqqs} with $s=3$ gives
$$\frac{d}{dx}Q(x,1)=\frac{2x-1}{2}e^{e^x+x-1}-\frac{6x+11}{2}e^{e^x-1}
+\frac{e^{2x}+4e^x+6}{2}\int_0^xe^{e^t-1}dt
+\frac{1}{2}e^{2x}+2e^x+\frac{7}{2},$$
which implies that $q_n$ for $n \geq 2$ is given by
$$-\frac{1}{2}B_n+\frac{2n-13}{2}B_{n-1}-3(n-2)B_{n-2}+2^{n-2}+2
+\sum_{j=1}^{n-1}\binom{n-1}{j}(2^{n-2-j}+2)B_{j-1}.$$
\end{example}

\begin{example}
Theorem \ref{thqqs} with $s=4$ gives
\begin{align*}
\frac{d}{dx}Q(x,1)&=-\frac{1}{6}e^{e^x+2x-1}+\frac{6x-7}{6}e^{e^x+x-1}-\frac{12x+25}{3}e^{e^x-1}
+\frac{e^{3x}+6e^{2x}+21e^x+30}{6}\\
&\quad+\frac{e^{3x}+6e^{2x}+18e^x+24}{6}\int_0^xe^{e^t-1}dt,
\end{align*}
which implies that $q_{n+1}$ for $n \geq 1$ is given by
$$-\frac{1}{6}B_{n+2}-B_{n+1}+\frac{3n-25}{3}B_n
-4(n-1)B_{n-1}+\frac{3^n+6\cdot2^n+21}{6}+\sum_{j=1}^n\binom{n}{j}\frac{3^{n-j}+6\cdot2^{n-j}+18}{6}B_{j-1}.$$
\end{example}

In Section \ref{combproofs}, a combinatorial proof is provided for \eqref{s=2Qxy} which makes use of the fact that $q_n$ when $s=2$ equals the total number of occurrences $a(a+2)$ in all the members of $\mathcal{P}_n$.

\section{Distribution of $a(a\pm s)$ on $k$-ary words}\label{apm s}

Given positive integers $k$ and $s$ with $k \geq s+1$, let $b_n=b_n(q)=b_n^{(k,s)}(q)$ denote the distribution for the number of occurrences of the string $a(a\pm s)$ on the set of $k$-ary words of length $n$ for $n \geq 1$, with $b_0=1$. Further, given $1 \leq i \leq k$, let $b_{n,i}=b_{n,i}(q)$ denote the restriction of $b_n$ to words ending in $i$.

To write recurrences for $b_{n,i}$, we must differentiate the following cases on $k$ and $s$.

\subsection{The case $s+1 \leq k \leq 2s$}

Upon reasoning as in Lemma \ref{lem1} above, we have in this case the following recurrences for $n \geq 2$:
\begin{align}
b_{n,i}&=b_{n-1}+(q-1)b_{n-1,i+s}, \qquad 1 \leq i \leq k-s, \label{bneq1}\\
b_{n,i}&=b_{n-1}, \qquad k-s+1 \leq i \leq s,\label{bneq2}\\
b_{n,i}&=b_{n-1}+(q-1)b_{n-1,i-s}, \qquad s+1 \leq i \leq k, \label{bneq3}
\end{align}
with $b_{1,i}=1$ for all $i$.

To aid in solving \eqref{bneq1}--\eqref{bneq3}, we define the generating functions $B_i(x)=\sum_{n\geq1}b_{n,i}x^n$ for $1 \leq i \leq k$ and $B(x)=\sum_{n\geq0}b_nx^n=1+\sum_{i=1}^kB_i(x)$. We have the following explicit formula for $b_n$ in this case.

\begin{theorem}\label{th2}
Let  $s+1\leq k\leq 2s$.  If $n\geq 1$, then
\begin{align*}
b_n&=k((2s-k)(q-1))^{(n-1)/2}U_{n-1}\left(\frac{k+q-1}{2\sqrt{(2s-k)(q-1)}}\right)\\
&\qquad\qquad-((2s-k)(q-1))^{n/2}U_{n-2}\left(\frac{k+q-1}{2\sqrt{(2s-k)(q-1)}}\right),
\end{align*}
where $U_n(t)$ is the $n$-th Chebyshev polynomial of the second kind with $U_{-1}(t)=0$.
\end{theorem}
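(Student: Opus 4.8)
The plan is to translate the recurrence system \eqref{bneq1}--\eqref{bneq3} into a linear system for the generating functions $B_i(x)$ and then extract a single equation for $B(x)$. First I would rewrite each recurrence, after multiplying by $x^n$ and summing over $n \geq 2$, as
$B_i(x) = x B(x) + (q-1)x B_{i+s}(x)$ for $1 \leq i \leq k-s$,
$B_i(x) = x B(x)$ for $k-s+1 \leq i \leq s$, and
$B_i(x) = x B(x) + (q-1)x B_{i-s}(x)$ for $s+1 \leq i \leq k$,
using that the initial conditions $b_{1,i}=1$ are absorbed by the $xB(x)$ term. Here the key structural point (because $k \leq 2s$) is that the indices split into the three bands $[1,k-s]$, $[k-s+1,s]$, $[s+1,k]$, and that the ``$+s$'' map sends the first band bijectively onto the third and vice versa via ``$-s$''; the middle band is inert. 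So for $i$ in the first band, substituting the third-band equation $B_{i+s} = xB(x) + (q-1)xB_i$ into the first gives $B_i(1 - (q-1)^2x^2) = xB(x)(1 + (q-1)x)$, and similarly for the third band. This yields closed forms for all $B_i$ in terms of $B(x)$.

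Next I would sum: $B(x) = 1 + \sum_{i=1}^k B_i(x)$. There are $k-s$ indices in each outer band and $2s-k$ in the middle band, so
$B(x) = 1 + (2s-k)xB(x) + 2(k-s)\cdot \dfrac{xB(x)(1+(q-1)x)}{1-(q-1)^2x^2} = 1 + (2s-k)xB(x) + \dfrac{2(k-s)xB(x)}{1-(q-1)x}$.
Solving this linear equation for $B(x)$ gives a rational function whose denominator, after clearing $1-(q-1)x$, is a quadratic in $x$: something of the shape $1 - (k+q-1)x + (2s-k)(q-1)x^2$ (up to checking the exact coefficients, which is routine). So $B(x) = \dfrac{1 - (q-1)x}{1 - (k+q-1)x + (2s-k)(q-1)x^2}$ or a close variant.

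Finally I would extract $b_n = [x^n]B(x)$. Writing the denominator as $1 - 2\beta t x + \beta^2 x^2$ with $\beta = \sqrt{(2s-k)(q-1)}$ and $t = \frac{k+q-1}{2\sqrt{(2s-k)(q-1)}}$, the generating function $\sum_{n\geq 0}\beta^n U_n(t)x^n = \frac{1}{1-2\beta t x + \beta^2 x^2}$ is the standard Chebyshev generating identity. Multiplying by the numerator $1 - (q-1)x$ (or whatever it turns out to be) and reading off coefficients produces exactly the stated combination $k\beta^{n-1}U_{n-1}(t) - \beta^n U_{n-2}(t)$, with the parity-dependent half-integer powers of $(2s-k)(q-1)$ being an artifact of pulling $\beta^{n}$ out of $\beta^n U_n(t)$; note the product $\beta^n U_n(t)$ is a genuine polynomial in $q$, so no square roots actually remain. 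I would also check the small cases ($n=1$ giving $b_1 = k$, using $U_0 = 1$, $U_{-1}=0$) to pin down the numerator and confirm the indexing.

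The main obstacle I anticipate is purely bookkeeping: getting the numerator of $B(x)$ and the exact denominator coefficients right (there is a real risk of an off-by-one in the band sizes or a sign error in the $(q-1)$ factors), and then matching the resulting rational function against the Chebyshev generating function so that the final answer comes out in precisely the displayed form rather than an equivalent-looking but differently-normalized one. The conceptual steps are all routine; the care is entirely in the algebra of the coefficient extraction and in verifying base cases.
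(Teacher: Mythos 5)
Your proposal is correct and follows essentially the same route as the paper: converting the three-band recurrences into generating-function equations, using the pairing of the outer bands under $\pm s$ to get $B_i(x)=\frac{xB(x)}{1-(q-1)x}$ there, summing to obtain $B(x)=\frac{1+(1-q)x}{1+(1-q-k)x-(2s-k)(1-q)x^2}$, and matching against $\sum_{n\geq 0}U_n(t)x^n=\frac{1}{1-2tx+x^2}$. The only detail you leave implicit (and which the paper also leaves to the reader) is the final application of the Chebyshev recurrence $U_n=2tU_{n-1}-U_{n-2}$ needed to convert $\beta^nU_n(t)-(q-1)\beta^{n-1}U_{n-1}(t)$ into the displayed form $k\beta^{n-1}U_{n-1}(t)-\beta^nU_{n-2}(t)$.
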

\begin{proof}
Multiplying both sides of \eqref{bneq1}--\eqref{bneq3} by $x^n$, and summing over $n\geq2$, we obtain
\begin{align*}
B_i(x)&=xB(x)+(q-1)xB_{i+s}(x), \qquad 1 \leq i \leq k-s, \\ B_i(x)&=xB(x), \qquad k-s+1 \leq i \leq s,\\
B_i(x)&=xB(x)+(q-1)xB_{i-s}(x), \qquad s+1 \leq i \leq k. \end{align*}
Note further that if $1\leq i\leq k-s$ or $s+1\leq i\leq k$, then
$$B_i(x)=xB(x)+(q-1)x(xB(x)+(q-1)xB_i(x)),$$
which implies
$$B_1(x)=\cdots=B_{k-s}(x)=B_{s+1}(x)=\cdots=B_k(x)=\frac{xB(x)}{1-(q-1)x}.$$
Hence,
$$B(x)=1+(2s-k)xB(x)+2(k-s)\frac{xB(x)}{1-(q-1)x},$$
which leads to
\begin{equation}\label{Bxeqn}
B(x)=\frac{1+(1-q)x}{1+(1-q-k)x-(2s-k)(1-q)x^2}.
\end{equation}
Upon recalling the well-known generating function formula $\sum_{n\geq0}U_n(t)x^n=\frac{1}{1-2tx+x^2}$, the proof may be completed
\end{proof}

From \eqref{Bxeqn}, we have that $b_n$ in the case $s+1 \leq k \leq 2s$ satisfies the two-term recursion
\begin{equation}\label{bnrec}
b_n=(k-1+q)b_{n-1}+(1-q)(2s-k)b_{n-2}, \qquad n \geq 2,
\end{equation}
with $b_0=1$ and $b_1=k$.

It is instructive to provide a combinatorial explanation of \eqref{bnrec}, which is done in Section \ref{combproofs} below.

\subsection{The case $k \geq 2s+1$}

Here, we have the recurrences for $n \geq 2$,
\begin{align}
b_{n,i}&=b_{n-1}+(q-1)b_{n-1,i+s}, \qquad 1 \leq i \leq s, \label{bn'eq1}\\
b_{n,i}&=b_{n-1}+(q-1)(b_{n-1,i-s}+b_{n-1,i+s}), \qquad s+1 \leq i \leq k-s,\label{bn'eq2}\\
b_{n,i}&=b_{n-1}+(q-1)b_{n-1,i-s}, \qquad k-s+1 \leq i \leq k, \label{bn'eq3}
\end{align}
with $b_{1,i}=1$ for all $i$.

Let $B_i(x)$ for $1 \leq i \leq k$ and $B(x)$ be as before.

\begin{theorem}\label{th2}
Let $k=ds+r$, where $d\geq2$ and $1\leq r\leq s$. Then
\begin{align*}
B(x)&=\frac{1}{1-rH_d(x)-(s-r)H_{d-1}(x)},
\end{align*}
where
$$H_d(x)=\sum_{\ell=0}^d\frac{x^2(1-q)\left(U_{\ell+1}(\frac{1}{2x(1-q)})+ U_\ell(\frac{1}{2x(1-q)})
+(-1)^\ell\right)^2}{(1+2x(1-q))^2U_{\ell+1}(\frac{1}{2x(1-q)}) U_{\ell}(\frac{1}{2x(1-q)})}$$
and $U_n(t)$ denotes the $n$-th Chebyshev polynomial of the second kind.
\end{theorem}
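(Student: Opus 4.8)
The plan is to proceed exactly as in the proof of Theorem~\ref{th2} for the range $s+1 \le k \le 2s$, but now the reduction of the linear system \eqref{bn'eq1}--\eqref{bn'eq3} does not collapse to two coupled groups; instead the indices split into $s$ residue classes modulo $s$, and within each class the generating functions $B_i(x)$ form a chain that behaves like a discrete boundary-value problem. First I would pass to generating functions, multiplying \eqref{bn'eq1}--\eqref{bn'eq3} by $x^n$ and summing over $n \ge 2$, obtaining
\begin{align*}
B_i(x) &= xB(x) + (q-1)xB_{i+s}(x), & 1 \le i \le s,\\
B_i(x) &= xB(x) + (q-1)x\bigl(B_{i-s}(x) + B_{i+s}(x)\bigr), & s+1 \le i \le k-s,\\
B_i(x) &= xB(x) + (q-1)xB_{i-s}(x), & k-s+1 \le i \le k.
\end{align*}
For a fixed residue $r_0 \in [s]$, set $G_\ell = B_{r_0 + \ell s}(x)$ for $\ell = 0, 1, \ldots, L$, where $L = d$ if $1 \le r_0 \le r$ and $L = d-1$ if $r < r_0 \le s$; this is a tridiagonal system $-(q-1)xG_{\ell-1} + G_\ell - (q-1)xG_{\ell+1} = xB(x)$ with ``Dirichlet'' boundary terms absorbed at $\ell = 0$ and $\ell = L$. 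The key step is to solve this finite tridiagonal system for $\sum_\ell G_\ell$ in closed form.

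The natural tool for the tridiagonal solve is an LU-decomposition of the coefficient matrix (as the introduction announces), or equivalently the standard three-term recurrence whose solutions are Chebyshev polynomials: with $t = \tfrac{1}{2x(1-q)}$ one has $\sum_{n\ge0} U_n(t)z^n = (1-2tz+z^2)^{-1}$, and the relevant continuant determinants of the $(\ell+1)\times(\ell+1)$ principal minors are $\bigl(x(1-q)\bigr)^{\ell}\,U_{\ell+1}(t)$ up to sign and normalization. Solving the inhomogeneous system with right-hand side the constant vector $xB(x)(1,\ldots,1)^t$ and summing the components then produces, for each residue class, a rational-in-$U$ expression; combining over the $r$ classes of length $d$ and the $s-r$ classes of length $d-1$ gives $\sum_{i=1}^k B_i(x) = \bigl(rH_d(x) + (s-r)H_{d-1}(x)\bigr)B(x)$ for the claimed function $H_d(x)$. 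Since $B(x) = 1 + \sum_{i=1}^k B_i(x)$, solving this single scalar equation yields the stated formula. The main obstacle I expect is the bookkeeping in the tridiagonal solve: one must correctly identify the boundary behavior at both ends of each chain (the ``missing'' neighbor at $\ell=0$ and at $\ell=L$), carry the Chebyshev identities through the summation $\sum_{\ell=0}^L G_\ell$, and verify that the telescoping/partial-fraction manipulation collapses to the compact form $\sum_{\ell=0}^d \frac{x^2(1-q)\bigl(U_{\ell+1}+U_\ell+(-1)^\ell\bigr)^2}{(1+2x(1-q))^2 U_{\ell+1}U_\ell}$ (all $U$'s evaluated at $\tfrac{1}{2x(1-q)}$); the appearance of the $(1+2x(1-q))^2$ denominator and the $(-1)^\ell$ term suggests a prior simplification using $U_{\ell+1}(t) + U_\ell(t) = \frac{\text{something}}{t+1}$-type identities evaluated at the specific argument, which is the delicate part.

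As a sanity check before writing the details, I would confirm that at $q=1$ the formula degenerates correctly: $H_d(x) \to$ the appropriate limit giving $B(x) = (1-kx)^{-1}$, and that specializing to $d=2$ (i.e.\ $s+1 \le k \le 2s$, with $r = k-s$, $s-r = 2s-k$) recovers \eqref{Bxeqn}; matching these two boundary cases essentially pins down the normalization constants in the Chebyshev expressions and guards against sign errors in the continuant computation.
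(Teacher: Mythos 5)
Your plan follows essentially the same route as the paper's proof: split the $B_i(x)$ into $s$ residue classes modulo $s$ ($r$ chains of length $d+1$ and $s-r$ of length $d$), solve each resulting tridiagonal system via an LU-decomposition whose entries are ratios of Chebyshev polynomials $U_\ell\bigl(\tfrac{1}{2x(1-q)}\bigr)$, sum the components, and collapse the alternating Chebyshev sums using $\sum_{j=0}^{n}(-1)^jU_j(t)=\frac{(-1)^n(U_{n+1}(t)+U_n(t))+1}{2(1+t)}$ to reach the stated $H_d(x)$. One small correction to your sanity check: the range $s+1\leq k\leq 2s$ corresponds to $d=1$ (not $d=2$), which lies outside the hypothesis $d\geq 2$ of this theorem, though formally substituting $d=1$ does recover \eqref{Bxeqn}.
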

\begin{proof}
Multiplying both sides of \eqref{bn'eq1}--\eqref{bn'eq3} by $x^n$, and summing over $n\geq2$, we obtain
\begin{align*}
B_i(x)&=xB(x)+(q-1)xB_{i+s}(x), \qquad 1 \leq i \leq s,\\
B_i(x)&=xB(x)+(q-1)x(B_{i-s}(x)+B_{i+s}(x)), \qquad s+1 \leq i \leq k-s,\\
B_i(x)&=xB(x)+(q-1)xB_{i-s}(x), \qquad k-s+1 \leq i \leq k.
\end{align*}
Let $k=ds+r$ be as above. Then the preceding recurrences may be written as
\begin{align*}
B_i(x)&=xB(x)+(q-1)xB_{i+s}(x),\\
B_{s+i}(x)&=xB(x)+(q-1)x(B_i(x)+B_{2s+i}(x)),\\
B_{2s+i}(x)&=xB(x)+(q-1)x(B_{s+i}(x)+B_{3s+i}(x)),\\
\vdots&\\
B_{(d-1)s+i}(x)&=xB(x)+(q-1)x(B_{(d-2)s+i}(x)+B_{ds+i}(x)),\\
B_{ds+i}(x)&=xB(x)+(q-1)xB_{(d-1)s+i}(x),
\end{align*}
for $1\leq i\leq r$, with the same system holding for $r+1\leq i\leq s$, but with $d$ replaced by $d-1$.  Let us first focus on the case $1 \leq i \leq r$. Define the column vectors
$$v=(B_i(x),B_{s+i}(x),\ldots,B_{ds+i}(x))^t \text{ and } b=xB(x)(1,1,\ldots,1)^t.$$ Define the $(d+1)\times(d+1)$ matrix ${\bf B}=(a_{i,j})_{0\leq i,j\leq d}$, where $a_{i,i}=1$, $a_{i,j}=x(1-q)$ if $|i-j|=1$ and $a_{i,j}=0$ if $|i-j|>1$.

Then the above system can be written as ${\bf B}v=b$. Note the LU-decomposition ${\bf B}={\bf LU}$, where ${\bf L}=(\ell_{i,j})$ with $$\ell_{i,i}=1,\quad \ell_{i,i-1}=\frac{U_{i-1}(\frac{1}{2x(1-q)})}{U_i(\frac{1}{2x(1-q)})}\quad \mbox{and $\ell_{i,j}=0$ whenever $j-i\geq 1$ or $i-j\geq2$},$$
and ${\bf U}=(u_{i,j})$ with
$$u_{i,i+1}=x(1-q),\quad u_{i,i}=\frac{x(1-q)U_{i+1}(\frac{1}{2x(1-q)})}{U_i(\frac{1}{2x(1-q)})}\quad \mbox{and $u_{i,j}=0$ whenever $j-i\geq 2$ or $i-j\geq1$}.$$
The solution of the system ${\bf L}z=b$, where $z=(z_0,\ldots,z_d)^t$, is given by
$$z_j=\frac{xB(x)}{U_j(\frac{1}{2x(1-q)})}\sum_{k=0}^j(-1)^kU_{j-k}(\frac{1}{2x(1-q)}),$$
for all $j=0,1,\ldots,d$, which can be shown by induction on $j$, starting with $j=0$. The solution of the system ${\bf U}v=z$ is given by
$$B_{js+i}(x)=B(x)U_j(\frac{1}{2x(1-q)})
\sum_{\ell=j}^d\sum_{k=0}^{\ell}\frac{(-1)^{\ell+k-j}
U_{\ell-k}(\frac{1}{2x(1-q)})}{(1-q)U_{\ell+1}(\frac{1}{2x(1-q)}) U_{\ell}(\frac{1}{2x(1-q)})},
$$
for all $j=0,1,\ldots,d$, which can be shown by induction on $j$, starting with $j=d$.  It is seen that the same formula also holds for $B_{js+i}(x)$ where $r+1 \leq i \leq s$ and $0 \leq j \leq d-1$, but with $d$ replaced by $d-1$. Hence, by the fact $$B(x)=1+\sum_{j=0}^d\sum_{i=1}^rB_{js+i}(x)+\sum_{j=0}^{d-1}\sum_{i=r+1}^sB_{js+i}(x),$$ we obtain
$$B(x)=1+rB(x)H_d(x)+(s-r)B(x)H_{d-1}(x),$$
where
$$H_d(x)=\sum_{j=0}^d\sum_{\ell=j}^d\sum_{k=0}^{\ell}\frac{(-1)^{\ell+k-j}
U_j(\frac{1}{2x(1-q)})U_{\ell-k}(\frac{1}{2x(1-q)})}{(1-q)U_{\ell+1}(\frac{1}{2x(1-q)}) U_{\ell}(\frac{1}{2x(1-q)})}.$$

We now show that the preceding formula for $H_d(x)$ matches the one given in the theorem statement above, from which the stated result will follow.  Interchanging summation, we have
\begin{align*}
H_d(x)&=\sum_{\ell=0}^d\sum_{j=0}^{\ell}\sum_{k=0}^{\ell}\frac{(-1)^{\ell+k-j}
U_j(\frac{1}{2x(1-q)})U_{\ell-k}(\frac{1}{2x(1-q)})}{(1-q)U_{\ell+1}(\frac{1}{2x(1-q)}) U_{\ell}(\frac{1}{2x(1-q)})}\\
&=\sum_{\ell=0}^d\frac{\sum_{j=0}^{\ell}(-1)^jU_j(\frac{1}{2x(1-q)})
\sum_{k=0}^{\ell}(-1)^{\ell-k}U_{\ell-k}(\frac{1}{2x(1-q)}) }{(1-q)U_{\ell+1}(\frac{1}{2x(1-q)}) U_{\ell}(\frac{1}{2x(1-q)})}\\
&=\sum_{\ell=0}^d\frac{\left(\sum_{j=0}^{\ell}(-1)^jU_j(\frac{1}{2x(1-q)})
\right)^2}{(1-q)U_{\ell+1}(\frac{1}{2x(1-q)}) U_{\ell}(\frac{1}{2x(1-q)})}.
\end{align*}
Applying now the identity
$$\sum_{j=0}^n(-1)^jU_j(t)=\frac{(-1)^n(U_{n+1}(t)+U_n(t))+1}{2(1+t)}, \qquad n \geq 0,$$
yields the desired formula for $H_d(x)$.
\end{proof}

\section{Combinatorial proofs}\label{combproofs}

In this section, we provide combinatorial explanations of several of the prior results shown above by algebraic methods. We first provide such proofs of the recurrences \eqref{analtrec} and \eqref{analtrecq=0}.

{\bf Proofs of recurrences \eqref{analtrec} and \eqref{analtrecq=0}:}\medskip

Let $\mathcal{A}_n=\mathcal{A}_{n,k}$ denote the set of $k$-ary words of length $n$.  To show \eqref{analtrec}, we first interpret the right-hand side as a certain weighted sum of members of $\mathcal{A}_n$, where $q$ is an indeterminate.  Let $\mathcal{A}_n^{(j)}$ denote the subset of $\mathcal{A}_n$ whose members end in an arithmetic progression of length at least $j+1$ having common difference $s$.  Given $\pi \in \mathcal{A}_n$, let $\text{wght}(\pi)$ be given by $q^{\mu(\pi)}$, where $\mu(\pi)$ denotes the number of strings of the form $a(a+s)$, and hence $a_n=\sum_{\pi \in \mathcal{A}_n}\text{wght}(\pi)$, by definition.  Define $\text{wght}_j(\pi)$ by
\begin{equation}\label{wghtj}
\text{wght}_j(\pi)=\begin{cases}
      \text{wght}(\pi)/q^j, & \text{if $\pi \in \mathcal{A}_n^{(j)}-\mathcal{A}_n^{(j+1)}$;}\\
         \text{wght}(\pi)/q^{j+1}, & \text{if  $\pi \in \mathcal{A}_n^{(j+1)}$;}\\
         \text{wght}(\pi), & \text{otherwise.}
    \end{cases}
\end{equation}
Note that
\begin{equation}\label{wghtje2}
\sum_{\pi \in \mathcal{A}_n^{(j)}}\text{wght}_j(\pi)=(k-js)a_{n-j-1}, \qquad 0 \leq j \leq k''.
\end{equation}
To realize \eqref{wghtje2}, consider appending $u(u+s)\cdots(u+js)$ for some $u \in [k-js]$ to  $\rho' \in \mathcal{A}_{n-j-1}$ to obtain $\rho \in \mathcal{A}_n^{(j)}$.  Then one can show $\rho$ is weighted by $\text{wght}_j(\rho)$ in $(k-js)a_{n-j-1}$, by considering cases on whether or not the last letter of $\rho'$ equals $u-s$ and using definition \eqref{wghtj}.

Now consider the sum of weights $\sum_{j=0}^{k''}(q-1)^j(k-js)a_{n-j-1}~(*)$ and suppose $\pi \in \mathcal{A}_n^{(\ell)}-\mathcal{A}_n^{(\ell+1)}$ for some $0 \leq \ell \leq k''$.  We wish to show that the contribution of $\pi$ towards the sum of weights (*) is given by $\text{wght}(\pi)$.  To do so, we need only consider terms for $0 \leq j \leq \ell$ in (*), since $\pi \in \mathcal{A}_n^{(\ell)}-\mathcal{A}_n^{(\ell+1)}$ implies it does not contribute to any of the terms for $j>\ell$, as they involve sums of weights of members of $\mathcal{A}_n^{(j)}$ for $j>\ell$.  By \eqref{wghtje2}, it then suffices to show
\begin{equation}\label{wghtje3}
\sum_{j=0}^\ell(q-1)^j\text{wght}_j(\pi)=\text{wght}(\pi), \qquad \pi \in \mathcal{A}_n^{(\ell)}-\mathcal{A}_n^{(\ell+1)}.
\end{equation}

Note that \eqref{wghtje3} may be shown using the definition of $\text{wght}_j(\pi)$ in \eqref{wghtj} and making use of the formula for a finite geometric series and some algebra.  To give a more combinatorial proof, first observe that
$$\sum_{j=0}^\ell(q-1)^j\text{wght}_j(\pi)=\sum_{j=0}^{\ell-1}\frac{(q-1)^j}{q^{j+1}}\text{wght}(\pi)+\frac{(q-1)^\ell}{q^{\ell}}\text{wght}(\pi)$$
is a sum of various weights for $\pi$ wherein the first $n-\ell-1$ adjacencies are assigned weights of $q$ or $1$ according to $\text{wght}(\pi)$ as usual, whereas the final $\ell$ adjacencies of $\pi$ are assigned weights of $1$, $-1$ or $q$ according to the rules: (i) at most a single weight of $1$ is used and (ii) if a weight of $1$ is used, any adjacencies prior to it must be assigned a $q$.  To see this, divide $q^{j+1}$ if $0 \leq j < \ell$ into $\text{wght}(\pi)$ to cancel out the $q$ weights corresponding to the final $j+1$ adjacencies and then reassign the final $j$ adjacencies weights of $-1$ and $q$ in an arbitrary fashion. On the other hand, if $j=\ell$, then we cancel only the $q$ weights for the final $\ell$ adjacencies before reassigning weights.  Note that when $0 \leq j<\ell$, there is always a single adjacency amongst the final $\ell$ that receives a weight of $1$.

We may pair these various weights for $\pi$ by considering the leftmost adjacency among the final $\ell$ that is assigned a weight of $1$ or $-1$ and switching to the other option.  This leaves only the weight in which the final $\ell$ adjacencies are each assigned $q$, which is in accordance with what it would be when computing $\text{wght}(\pi)$.  Since the first $n-\ell-1$ adjacencies of $\pi$ were assigned weights in the usual way using $\text{wght}(\pi)$, the left-hand side of \eqref{wghtje3} works out to $\text{wght}(\pi)$, as desired.  Since \eqref{wghtje3} holds for all $0 \leq \ell \leq k''$, we see that $\pi$ contributes $\text{wght}(\pi)$ towards the sum (*) for all $\pi \in \mathcal{A}_n$, and hence the sum is given by $a_n$, which implies \eqref{analtrec}.

We now show \eqref{analtrecq=0}.  Let $\mathcal{D}_n$ denote the subset of $\mathcal{A}_n$ whose members contain no adjacencies of the form $a(a+s)$ for some $a$.  Given $1 \leq j \leq k''$, let $\mathcal{D}_n^{(j)}$ denote the subset of $\mathcal{A}_n$ whose members end in an arithmetic progression of length $j+1$ with common difference $s$, but otherwise contain no other occurrences of $a(a+s)$, with $\mathcal{D}_n^{(0)}=\mathcal{D}_n$ and $\mathcal{D}_n^{(k''+1)}=\varnothing$.  Observe that $\pi \in \mathcal{D}_n^{(j)}\cup\mathcal{D}_n^{(j+1)}$ for some $0 \leq j \leq k''$ if and only if $\pi$ can be expressed as
\begin{equation}\label{formofpi}
\pi=\pi'u(u+s)\cdots (u+js),
\end{equation}
where $\pi' \in \mathcal{D}_{n-j-1}$ and $u \in [k-js]$, upon considering whether or not the final letter of $\pi'$ equals $u-s$. Further, since the final letter of $\pi'$ and $u$ may be chosen independently of one another, there are $(k-js)a_{n-j-1}(0)$ possible $\pi$ of the form in \eqref{formofpi}.

We thus have
\begin{align*}
&\sum_{j=0}^{k''}(-1)^j(k-js)a_{n-j-1}(0)=\sum_{j=0}^{k''}(-1)^j(|\mathcal{D}_n^{(j)}|+|\mathcal{D}_{n}^{(j+1)}|)\\
&=|\mathcal{D}_n^{(0)}|+\sum_{j=1}^{k''}(-1)^j|\mathcal{D}_n^{(j)}|+\sum_{j=1}^{k''+1}(-1)^{j-1}|\mathcal{D}_n^{(j)}|\\
&=|\mathcal{D}_n^{(0)}|+\sum_{j=1}^{k''}(-1)^j|\mathcal{D}_n^{(j)}|-\sum_{j=1}^{k''}(-1)^j|\mathcal{D}_n^{(j)}|=|\mathcal{D}_n|=a_n(0),
\end{align*}
which gives \eqref{analtrecq=0}.  Alternatively, it is possible to make the proof completely combinatorial by considering a further structure as follows.  If $1 \leq j \leq k''$, then let $\mathcal{E}_n^{(j)}$ denote the set of ordered pairs $\rho=(\lambda,i)$, where $\lambda \in \mathcal{D}_n^{(j)}$ and $i$ equals $j-1$ or $j$, with $\mathcal{E}_n^{(0)}$ the set of pairs $\rho=(\lambda,0)$, where $\lambda \in \mathcal{D}_n^{(0)}$.  Define the sign of $\rho=(\lambda,i) \in \mathcal{E}_n^{(j)}$ as $(-1)^i$.  Then it is seen that the right side of \eqref{analtrecq=0} gives the sum of the signs of all members of $\mathcal{E}_n=\cup_{j=0}^{k''}\mathcal{E}_n^{(j)}$.  Define a sign-reversing involution on $\mathcal{E}_n$ by changing $i$ in $\rho=(\lambda,i) \in \mathcal{E}_n^{(j)}$ for some $j>0$ to the other option.  The set of survivors of this involution consists of the members of $\mathcal{E}_n^{(0)}$, of which there are $a_n(0)$ with each having positive sign, which completes the proof.\hfill \qed \medskip

For the particular cases of $a_n(0)$ from Corollary \ref{cor3}, it is possible also to provide a more direct combinatorial explanation, which makes use of a subtraction argument.

{\bf Proof of Corollary \ref{cor3}:} \medskip

For the first equality, one may assume $n \geq 2$, since it clearly holds for $n=0,1$. Then $a_n^{(3,2)}(0)$ is seen to satisfy the recurrence $u_n=3u_{n-1}-u_{n-2}$ as follows.  Let $\mathcal{J}_n$ denote the subset of $\mathcal{A}_{n,3}$ whose members contain no occurrences of the string 1-3 and let $u_n=|\mathcal{J}_n|$.  Then there are clearly $2u_{n-1}$ members of $\mathcal{J}_n$ that end in $1$ or $2$.  Words in $\mathcal{J}_n$ ending in $3$ may be obtained by appending $3$ to words in $\mathcal{J}_{n-1}$ not ending in $1$, of which there are $u_{n-1}-u_{n-2}$, by subtraction.  Combining the prior cases then gives the desired recurrence for $u_n$.  A similar argument shows $u_n=a_n^{(4,2)}(0)$ satisfies $u_n=4u_{n-1}-2u_{n-2}$ for $n \geq 2$.

For the third formula in Corollary \ref{cor3}, we may assume $n \geq 3$, upon verifying the equality directly for $0 \leq n \leq 2$, and then show that $a_n^{(5,2)}(0)$ satisfies the recurrence $u_n=5u_{n-1}-3u_{n-2}+u_{n-3}$ for $n \geq 3$.  Let $u_n=|\mathcal{L}_n|$, where $\mathcal{L}_n$ denotes the subset of $\mathcal{A}_{n,5}$ whose members do not contain the strings 1-3, 2-4 and 3-5. Note that there are clearly $2u_{n-1}$ members of $\mathcal{L}_n$ that end in $1$ or $2$.  By subtraction, there are $2(u_{n-1}-u_{n-2})$ words in $\mathcal{L}_n$ that end in $3$ or $4$, upon appending $3$ or $4$ respectively to words in $\mathcal{L}_{n-1}$ not ending in $1$ or $2$.  To complete the proof, we must enumerate the subset of $\mathcal{L}_n$ whose members end in $5$, or equivalently the subset of $\mathcal{L}_{n-1}$ whose members do not end in $3$.  By subtraction, there are $u_{n-2}-u_{n-3}$ members of $\mathcal{L}_{n-1}$ ending in $3$, upon appending $3$ to members of $\mathcal{L}_{n-2}$ not ending in $1$.  Hence, again by subtraction, there are $u_{n-1}-(u_{n-2}-u_{n-3})$ members of $\mathcal{L}_n$ not ending in $5$.  Combining this case with the previous yields the desired recurrence for $u_n$ and completes the proof. \hfill \qed \medskip

{\bf Proofs of Corollaries \ref{Fnpolycor1} and \ref{Fnpolycor2}:} \medskip

We may assume $n \geq 2$ throughout.  Note first that $j_n(0,1)=|\mathcal{J}_n'|$, where $\mathcal{J}_n'$ denotes the subset of $\mathcal{J}_n$ whose members have no levels.  Let $\mathcal{J}_n''$ denote the subset of $\mathcal{J}_n'$ whose members start with $2$ and $t_n=|\mathcal{J}_n''|$. Then there are $t_{n-1}$ members of $\mathcal{J}_n'$ that start with $1$, and $t_n$ that start with $3$. To realize the latter, consider replacing all $2$'s by $3$'s and vice versa prior to the first $1$ (if it exists), which defines a bijection between the subsets of $\mathcal{J}_n'$ starting with $2$ and $3$.  Thus, in order to establish $|\mathcal{J}_n'|=F_{n+3}$, it is enough to show $t_n=F_{n+1}$.  To do so, we make use of the combinatorial interpretation of $F_{n+1}$ as the enumerator of the set $\mathcal{F}_{n}$ of square-and-domino tilings of length $n$ (see, e.g., \cite[Chapter~1]{BQ}) and seek a bijection between $\mathcal{J}_n''$ and $\mathcal{F}_n$.

Let $\pi=\pi_1\cdots \pi_n \in \mathcal{J}_n''$.  We first overline both letters in the rightmost occurrence in $\pi$ of 2-1 or 3-2, if it exists.  Once this is done, we then overline both letters in the rightmost occurrence of 2-1 or 3-2 occurring to the left of the first overlined string (where two overlined strings cannot have a letter in common).  We then continue from right to left overlining disjoint strings of 2-1 or 3-2 until one reaches the beginning of $\pi$.  Beneath each of the remaining letters that is not part of an overlined string, we place a dot.  Finally, we put a square for each dot and a domino for each overlined string going from left to right to obtain a member $\pi' \in \mathcal{F}_n$.  One may verify that the mapping $\pi \mapsto \pi'$ yields the desired bijection between $\mathcal{J}_n''$ and $\mathcal{F}_n$, which establishes the first statement in Corollary \ref{Fnpolycor1}.  For the second, observe that $j_n(1,0)=\binom{n+2}{2}$, the enumerated members of $\mathcal{J}_n$ in this case consisting of the weakly decreasing $3$-ary sequences of length $n$.

For the first formula in Corollary \ref{Fnpolycor2}, note that a level may be obtained by inserting directly following any letter in $\pi=\pi_1\cdots \pi_{n-1} \in \mathcal{J}_{n-1}$ an extra copy of it and marking the resulting level.  From this, it is seen that there are $(n-1)F_{2n}$ marked members of $\mathcal{J}_n$ wherein a level is marked, which implies the first formula.  For the second, note first that $\lambda \in \mathcal{J}_n$ in which an ascent is marked may be decomposed as $\lambda=\lambda'\underline{x}\lambda''$, where the marked ascent $x$ is underlined and may be either 1-2 or 2-3.  Further, $\lambda' \in \mathcal{J}_i$ and $\lambda'' \in \mathcal{J}_{n-i-2}$, as the letters in $x$ do not restrict the choice of either $\lambda'$ or $\lambda''$. Considering all possible $i$, and recalling $|\mathcal{J}_i|=F_{2i+2}$, implies that there are
$$2\sum_{i=0}^{n-2}F_{2i+2}F_{2n-2i-2}=2\sum_{i=1}^{n-1}F_{2i}F_{2n-2i}$$
members of $\mathcal{J}_n$ in which some ascent is marked.  The formula for the total number of ascents in $\mathcal{J}_n$ now follows from the identity
$$5\sum_{i=1}^{n-1}F_{2i}F_{2n-2i}=(n-1)L_{2n}-2F_{2n-2},  \qquad n \geq 1.$$
This formula can be shown most easily by making use of the Binet formulas for $F_n$ and $L_n$.  Alternatively, it is possible to provide a combinatorial proof utilizing the tiling interpretations for $F_n$ and $L_n$ that extends the argument given for the identity
$$5\sum_{i=1}^{n-1}F_iF_{n-i}=nL_n-F_n, \qquad n \geq 1,$$
in \cite[Identity~58]{BQ}. \hfill \qed \medskip

{\bf Proof of Corollary \ref{totPnk}:}\medskip

The total in question equals the cardinality of the set $\mathcal{P}_{n,k}^*$ whose members are derived from those in $\mathcal{P}_{n,k}$ by marking a single occurrence of $a(a+s)$ for some $a$.  Let $\pi=\pi_1\cdots\pi_n \in \mathcal{P}_{n,k}$.  By an \emph{initial} letter within $\pi$, we mean one corresponding to the first of its kind within a left-to-right scan of $\pi$.  Then there are $(k-s)S(n-1,k)$ members of $\mathcal{P}_{n,k}^*$ in which the second letter in the marked string is initial.  To see this, select some $x \in [s+1,k]$, insert an $x-s$ directly prior to the first $x$  within a member of $\mathcal{P}_{n-1,k}$ and then mark the resulting occurrence of $a(a+s)$.  Note that $s \geq 2$ implies there are no occurrences of $a(a+s)$ in which the $a$ is initial.

Now suppose that neither letter of the marked string within a member of $\mathcal{P}_{n,k}^*$ is initial.  Let $\pi=\pi^{(1)}\cdots \pi^{(k)}$, where each $\pi^{(i)}$ is $i$-ary and starts with $i$.  Suppose that the marked string is contained within $\pi^{(j)}$ for some $j \in [s+1,k]$.  Then we may write $\pi^{(j)}=j\alpha\underline{a(a+s)}\beta$, where the marked string is underlined and the subsequences $\alpha$ and $\beta$ are possibly empty $j$-ary words.  Suppose $|\beta|=t$ for some $t \geq 0 $.  Note that there are $j-s$ choices for $a$ and $j^t$ possibilities for $\beta$.  Deleting the section $\underline{a(a+s)}\beta$ from $\pi^{(j)}$ then results in an arbitrary member of $\mathcal{P}_{n-t-2,k}$.  Thus, considering all possible $j$ and $t$ gives $\sum_{j=s+1}^k\sum_{t=0}^{n-k-2}S(n-t-2,k)j^t(j-s)$ members of $\mathcal{P}_{n,k}^*$ in which the marked string does not contain an initial letter.  Combining this with the first case above then yields $|\mathcal{P}_{n,k}^*|$, and hence the desired formula for the total number of occurrences of $a(a+s)$. \hfill \qed \medskip

{\bf Proof of formula \eqref{s=2Qxy}:} \medskip

Formula \eqref{s=2Qxy} clearly holds when $n=2$ or 3, so we may assume $n \geq 4$. Let $\pi=\pi_1\cdots \pi_n \in \mathcal{P}_{n,k}$.  By an \emph{initial} letter within $\pi$, we mean one corresponding to the leftmost occurrence of its kind, with all other letters being \emph{non-initial}.  Likewise, we will describe an occurrence of $a(a+2)$ for some $a$ in which the $a+2$ is the first letter of its kind within $\pi$ as \emph{initial}, with all other occurrences being \emph{non-initial}. Note that in an adjacency of the form $a(a+2)$, the $a$ cannot be initial due to the restricted growth condition on set partitions.  We first seek to count all non-initial occurrences of $a(a+2)$ within $\mathcal{P}_{n,k}$, where $k \geq 2$ and $n \geq k+2$. One may form such an occurrence as follows.  Let $\pi=\pi^{(1)}\cdots \pi^{(k)} \in \mathcal{P}_{n-1,k}$, where each $\pi^{(i)}$ is $i$-ary and starts with $i$, and choose some $j \in [2,k]$ such that $\pi^{(j)}=j\tau$ where $\tau \neq \varnothing$. Then inserting $\ell+2$ directly following any letter $\ell$ of $\tau$ such that $\ell\neq j-1,j$ produces a non-initial $a(a+2)$, with all such occurrences of $a(a+2)$ arising (uniquely) in this manner as $\pi$ and $j$ vary.

We now enumerate all letters $t$ in $\mathcal{P}_{n-1,k}$ such that $t \in \pi^{(j)}$ for some $2 \leq j \leq k$ within some member of $\mathcal{P}_{n-1,k}$, where either $t=j-1$ or $t=j$ and is non-initial.  Note that such a letter $t$ arises by inserting either $j-1$ or $j$  directly after some letter $x$ within a member $\lambda=\lambda^{(1)}\cdots\lambda^{(k)} \in \mathcal{P}_{n-2,k}$, where $x \in \lambda^{(j)}$ and $j \geq 2$.  Therefore, the number of such letters $t$ is twice the number of letters $x$, and we seek to enumerate the latter.  To do so, note that there are clearly $(n-2)S(n-2,k)$ letters altogether in $\mathcal{P}_{n-2,k}$, and from these, we must subtract those occurring as part of $\lambda^{(1)}$ within some $\lambda \in \mathcal{P}_{n-2,k}$.  There are $S(n-i-2,k)$ such letters occurring in the $(i+1)$-st position for some $0 \leq i \leq n-k-2$, and thus $\sum_{i=0}^{n-k-2}S(n-i-2,k)$ altogether within the members of $\mathcal{P}_{n-2,k}$.  Hence, there are $(n-2)S(n-2,k)-\sum_{i=0}^{n-k-2}S(n-i-2,k)$ letters $x$ as described.

We then subtract the number of letters $t$ from the total number of non-initial letters $>1$ in all the members of $\mathcal{P}_{n-1,k}$ to obtain the number of non-initial occurrences of $a(a+2)$.  Let $\rho=\rho^{(1)}\cdots \rho^{(k)}$, where each $\rho^{(i)}$ is $i$-ary and starts with $i$,  denote an arbitrary member of $\mathcal{P}_{n-1,k}$.  By subtraction of the number of $1$'s, other than the first, which occur as part of $\rho^{(1)}$ within some $\rho$, we have that there are $(n-k-1)S(n-1,k)-\sum_{i=0}^{n-k-2}S(n-i-2,k)$ non-initial letters $>1$ in $\mathcal{P}_{n-1,k}$.  Thus, there are
\begin{align*}
&(n-k-1)S(n-1,k)-\sum_{i=0}^{n-k-2}S(n-i-2,k)-2((n-2)S(n-2,k)-\sum_{i=0}^{n-k-2}S(n-i-2,k))\\
&=(n-k-1)S(n-1,k)-2(n-2)S(n-2,k)+\sum_{i=0}^{n-k-2}S(n-i-2,k) \quad (*)
\end{align*}
non-initial occurrences of $a(a+2)$ in members of $\mathcal{P}_{n,k}$ for $2 \leq k \leq n-2$.  Note that (*) also gives the correct value of zero when $k=1$ and $k=n-1$. Summing (*) over all $1 \leq k \leq n-1$, and making use of the identity $\sum_{k=1}^{n-1}kS(n-1,k)=B_n-B_{n-1}$, which may be explained combinatorially, then yields
\begin{align*}
&(n-1)\sum_{k=1}^{n-1}S(n-1,k)-\sum_{k=1}^{n-1}kS(n-1,k)-2(n-2)\sum_{k=1}^{n-2}S(n-2,k)+\sum_{k=1}^{n-2}\sum_{i=0}^{n-k-2}S(n-i-2,k)\\
&=(n-1)B_{n-1}-(B_n-B_{n-1})-2(n-2)B_{n-2}+\sum_{i=0}^{n-3}\sum_{k=1}^{n-i-2}S(n-i-2,k)\\
&=nB_{n-1}-B_n-(2n-5)B_{n-2}+\sum_{i=1}^{n-3}B_i
\end{align*}
non-initial occurrences of $a(a+2)$ in $\mathcal{P}_n$ altogether.

We now count the initial $a(a+2)$ occurrences in $\mathcal{P}_{n,k}$.  Note that such an occurrence may be obtained by choosing some $i \in [3,k]$ and then inserting $i-2$ directly prior to the initial $i$ within a member of $\mathcal{P}_{n-1,k}$, where $k \geq 3$.  Hence, there are $(k-2)S(n-1,k)$ initial occurrences of $a(a+2)$ in $\mathcal{P}_{n,k}$ for each $k \geq 2$.  Summing over $k$ gives
$$\sum_{k=2}^{n-1}(k-2)S(n-1,k)=B_n-B_{n-1}-1-2(B_{n-1}-1)=B_n-3B_{n-1}+1$$
initial occurrences of $a(a+2)$ in $\mathcal{P}_n$.  Combining this formula with the prior one then yields
\begin{align*}
&nB_{n-1}-B_n-(2n-5)B_{n-2}+\sum_{i=1}^{n-3}B_i+(B_n-3B_{n-1}+1)\\
&=(n-3)B_{n-1}-(2n-5)B_{n-2}+\sum_{i=0}^{n-3}B_i
\end{align*}
occurrences of $a(a+2)$ altogether in the members of $\mathcal{P}_n$, as desired.  \hfill \qed \medskip

{\bf Proof of recurrence \eqref{bnrec}:} \medskip

We may assume $n \geq 2$, the initial conditions being clear, and that $q$ is a positive integer.  We enumerate the set $\mathcal{A}_{n,k}'$  of colored members of $\mathcal{A}_{n,k}$ wherein in each word all letters corresponding to the second position in an occurrence of $a+s$ or $a-s$ for some $a$ receives one of $q$ colors, where $s+1 \leq k \leq 2s$.  Let $\pi=\pi_1\cdots \pi_{n-1} \in \mathcal{A}_{n-1,k}'$.  Observe that there are $k-1$ letters $i \in [k]$ such that $i - \pi_{n-1} \neq \pm s$ if $\pi_{n-1} \in [k]-[k-s+1,s]$, with all $i \in [k]$ satisfying $i - \pi_{n-1} \neq \pm s$ if $\pi_{n-1} \in [k-s+1,s]$.  Further, note that if the final letter of a member $\pi \in \mathcal{A}_{n-1,k}'$ belongs to $[k-s+1,s]$, then it may be deleted, as it cannot be the second entry in a string of the form $a(a\pm s)$, and hence there are $(2s-k)b_{n-2}$ possibilities for such $\pi$.  Thus, there are
$$(k-1)(b_{n-1}-(2s-k)b_{n-2})+k(2s-k)b_{n-2}=(k-1)b_{n-1}+(2s-k)b_{n-2}$$
members of $\mathcal{A}_{n,k}'$ altogether in which the final letter is not part of an occurrence of $a(a \pm s)$.  Further, for each $\pi \in \mathcal{A}_{n-1,k}'$ such that $\pi_{n-1} \in [k]-[k-s+1,s]$, of which there are, by subtraction, $b_{n-1}-(2s-k)b_{n-2}$ possibilities, one may append $\pi_{n-1}+s$ if $\pi_{n-1} \in [k-s]$ and $\pi_{n-1}-s$ if $\pi_{n-1} \in [s+1,k]$ to obtain a member of $\mathcal{A}_{n,k}'$ where the final letter is colored.  Note that $s+1 \leq k \leq 2s$ implies that this yields in a unique manner all members of $\mathcal{A}_{n,k}'$ for which this is the case.  Thus, we get $q(b_{n-1}-(2s-k)b_{n-2})$ such members of $\mathcal{A}_{n,k}'$ and combining with the first case above gives
\begin{align*}
b_n&=(k-1)b_{n-1}+(2s-k)b_{n-2}+q(b_{n-1}-(2s-k)b_{n-2})\\
&=(k-1+q)b_{n-1}+(1-q)(2s-k)b_{n-2},
\end{align*}
as desired. \hfill \qed \medskip

\end{document}